\numberwithin{equation}{section}
\DeclareFontFamily{U}{mathb}{\hyphenchar\font45}
\DeclareFontShape{U}{mathb}{m}{n}{
      <5> <6> <7> <8> <9> <10> gen * mathb
      <10.95> mathb10 <12> <14.4> <17.28> <20.74> <24.88> mathb12
      }{}
\DeclareSymbolFont{mathb}{U}{mathb}{m}{n}
\DeclareMathSymbol{\righttoleftarrow}{3}{mathb}{"FD}
\theoremstyle{plain}
\newtheorem{prop}{Proposition}
\newtheorem{theo}[prop]{Theorem}
\newtheorem{coro}[prop]{Corollary}
\theoremstyle{definition}
\newtheorem{rema}[prop]{Remark}
\newtheorem{exam}[prop]{Example}
\newtheorem*{cond*}{Condition (H1)}
\def\ra{\rightarrow}
\newcommand{\eqto}{\stackrel{\lower1.5pt\hbox{$\scriptstyle\sim\,$}}\to}
\newcommand{\eqdashto}{\stackrel{\lower1.5pt\hbox{$\scriptstyle\sim\,$}}\dashrightarrow}
\def\cA{{\mathcal A}}
\def\cE{{\mathcal E}}
\def\cG{{\mathcal G}}
\def\cO{{\mathcal O}}
\def\cP{{\mathcal P}}
\def\cQ{{\mathcal Q}}
\def\cX{{\mathcal X}}
\def\fS{{\mathfrak S}}
\def\mD{{\mathfrak D}}
\def\bA{{\mathbb A}}
\def\bG{{\mathbb G}}
\def\bP{{\mathbb P}}
\def\bQ{{\mathbb Q}}
\def\bZ{{\mathbb Z}}
\def\bN{{\mathbb N}}
\def\bC{{\mathbb C}}
\def\rH{{\mathrm H}}
\def\Alb{\mathrm{Alb}}
\def\Aut{\mathrm{Aut}}
\def\Br{\mathrm{Br}}
\def\Bl{\mathrm{Bl}}
\def\Pic{\mathrm{Pic}}
\def\End{\mathrm{End}}
\def\Gr{\mathrm{Gr}}
\def\Kum{\mathrm{Kum}}
\def\SL{\mathrm{SL}}
\def\GL{\mathrm{GL}}
\def\PGL{\mathrm{PGL}}
\def\Hom{\mathrm{Hom}}
\def\lim{\mathrm{lim}}
\def\Spec{\mathrm{Spec}}
\def\Sym{\mathrm{Sym}}
\newcommand{\NS}{\operatorname{NS}}
\author{Brendan Hassett}
\address{Department of Mathematics\\
Brown University \\
Box 1917 
151 Thayer Street
Providence, RI 02912 \\
USA}
\email{brendan\underline{ }hassett@brown.edu}
\author{Yuri Tschinkel}
\address{Courant Institute\\
                New York University \\
                New York, NY 10012 \\
                USA }
\address{Simons Foundation\\
160 Fifth Avenue\\
New York, NY 10010\\
USA}
\email{tschinkel@cims.nyu.edu}
\title[Complete intersections of two quadrics]{Equivariant geometry of odd-dimensional complete intersections of two quadrics}
\begin{document}
\date{\today}

\dedicatory{To Herb Clemens, with admiration}

\maketitle

\begin{abstract}
Fix a finite group $G$. We seek to classify varieties with $G$-action equivariantly birational to a 
representation of $G$ on affine or projective space. Our focus is odd-dimensional smooth complete intersections
of two quadrics, relating the equivariant rationality problem with analogous Diophantine questions over nonclosed fields. We explore
how invariants -- both classical cohomological invariants and recent symbol constructions -- control rationality in some cases.
\end{abstract}

\section{Introduction}
\label{sect:int}

Let $X\subset \bP^{2g+1}$ be a smooth complete intersection of two quadrics over an algebraically closed field $k$
of characteristic zero. We are particularly interested in these varieties because they have a rich birational structure, which can be completely understood in small dimensions. They also have beautiful connections to hyperelliptic curves and are key examples in the theory of 
of intermediate Jacobians.  

In this paper, we study these varieties from the perspective of their equivariant geometry, 
for regular generically free actions of finite groups. 
The main problem is to distinguish such actions up to equivariant birational equivalence, and in particular, to determine which of these are {\em linearizable}, i.e., equivariantly birational to a linear action on $\bP^{2g-1}$. This shares
many similarities with the study of these varieties over nonclosed fields, but has important special features.  

To address this problem, we examine various canonical constructions:
\begin{itemize}
\item{automorphism groups and their induced actions on geometric invariants;}
\item{the structure of varieties of linear subspaces on $X$ and associated pencils of quadric hypersurfaces;}
\item{intermediate Jacobians and their principal homogeneous spaces.}
\end{itemize} 
We elaborate on constructions of Reid \cite{ReidThesis}, Desale-Ramanan \cite{DR}, Donagi \cite{Donagi}, 
and Bhargava-Gross-Wang \cite{BGW,wang} from a functorial/moduli perspective applicable to equivariant
geometry. We also present a new connection with hyperk\"ahler geometry (see Section~\ref{sect:lagrangian}),
extending Kummer-type constructions to higher dimensions; connections between Fano and hyperk\"ahler geometry are in
the focus of many recent studies, including \cite{FMOS}.  

Recent work \cite{BenWit1,HT19,HT1,BenWit2,KP1} addresses rationality questions for geometrically-rational threefolds
over nonclosed fields. Our principal theorem (Theorem~\ref{thm:main-qua}) demonstrates how these results translate
into equivariant contexts: for smooth complete intersections of two quadrics in $\bP^5$, rationality is governed by the existence
of lines.

In Section~\ref{sect:AI}, we present fundamental notions of $G$-equivariant rationality and related cohomological 
invariants. We summarize key geometric structures arising from odd-dimensional complete intersections of two quadrics
in Section~\ref{sect:geometry}.  Equivariant constructions and results over nonclosed fields are developed in parallel.
The resulting principal homogeneous spaces and their embeddings are explored in
Section~\ref{sect:PHS}. Section~\ref{sect:lagrangian} breaks from the main narrative to make a connection with hyperk\"ahler
manifolds in small dimensions. The rest of the paper focuses on rationality problems. Two key constructions are presented
in Section~\ref{sect:RatCon}. Section~\ref{sect:NCfield} relates existence of fixed points to the analogous questions on 
rational points over function fields. 
We close with detailed analysis of the three-dimensional case in Section~\ref{sect:ADT}, highlighting both generic behavior and the special properties
of examples with large automorphism groups. This brings into sharp relief
the similarities and differences between equivariant geometry and geometry
over nonclosed fields.

\

\noindent
{\bf Acknowledgments:} The first author was partially supported by Simons Foundation Award 546235 and 
NSF grant 1701659, the second author by NSF grant 2000099.
We thank Aaron Landesman and Daniel Litt for pointing out a gap in a previous version of this paper, as well as the referees for their
extraordinary care reviewing our manuscript.

\section{Actions and invariants}
\label{sect:AI}
In this section, the base field $k$ is algebraically closed of characteristic zero.

Let $G$ be a finite group and $X$ a smooth projective (connected) variety with a regular $G$-action; we call such varieties $G$-varieties. We say that 
$G$-varieties $X,Y$ are $G$-birational if there exists a $G$-equivariant birational map $X\dashrightarrow Y$; 
stable $G$-birationality means $G$-birationality of 
$X\times \bP^n$ and $Y\times \bP^m$, with trivial $G$-actions on the second factors. Of particular interest are cases 
when $Y=\bP^n$ is projective space, with {\em linear} $G$-action, which we describe below.

\subsection{Linear actions}
\label{subsect:linearaction}

An action of a finite group $G$ on $\bP^n$ is given by a representation
$G \ra \PGL(V)$ where $V=\bA^{n+1}$.
A linear action on $\bP^n$ may have at least two different meanings:
\begin{itemize}
\item{{\em strictly linear action}: the projectivization of a linear representation $G \ra \GL(W \oplus 1)$ where
$W=\bA^n$ and $1$ is the trivial representation;}
\item{{\em linear action:} the projectivization of a linear representation $G \ra \GL(V)$ where $V=\bA^{n+1}$;}
\end{itemize}
We record a few obvious facts:
\begin{itemize}
\item{strictly linear actions admit fixed points;}
\item{if $L$ is a one-dimensional representation of $G$ then the representations
$V$ and $V\otimes L$ give rise to the same projective actions;}
\item{using the exact sequence
$$1 \rightarrow \mu_{n+1} \rightarrow \SL_{n+1} \rightarrow \PGL_{n+1} \rightarrow 1
$$
any projective action lifts to a linear action for a central extension
$$1 \rightarrow \mu_{n+1} \rightarrow \widetilde{G} \rightarrow G \rightarrow 1
$$
and the exact sequence
$$1 \rightarrow \bG_m \rightarrow \GL_{n+1} \rightarrow \PGL_{n+1} \rightarrow 1 $$
lifts any projectively linear action to a representation on a central extension
$$ 1 \rightarrow \bG_m \rightarrow \widehat{G} \rightarrow G \rightarrow 1;$$
}
\item{given a projective representation $\rho:G \ra \PGL_{n+1}$, the resulting cohomology class
\begin{equation} \label{alpha}
\alpha(\rho) \in \rH^2(G,k^{\times}), \quad (n+1)\alpha(\rho)=0,
\end{equation}
measures the obstruction to lifting $\rho$ to a linear representation of $G$.}
\end{itemize}
One last observation: Suppose we are given a projective action of $G$ with a fixed point
$p \in \bP^n$. We obtain a lift 
$$\widetilde{G} \rightarrow \SL(V)$$
and the preimage of $p$ gives a one-dimensional subspace $L\subset V$. The tensor product $V\otimes L^{-1}$
is also a representation of $\widetilde{G}$ on which $\mu_{n+1}$ acts trivially, thus descends to a 
representation of $G$. Thus we find:
\begin{prop} \label{prop:fixstrict}
A projective representation with fixed point is strictly linear and the class $\alpha$ vanishes.  
\end{prop}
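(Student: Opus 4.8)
The statement is essentially spelled out in the ``one last observation'' preceding it, so the plan is to organize that argument carefully. Start from a projective linear action $\rho\colon G\to\PGL(V)$, $V=\bA^{n+1}$, with a fixed point $p\in\bP(V)$. Pulling back the central extension $1\to\mu_{n+1}\to\SL(V)\to\PGL(V)\to 1$ along $\rho$ produces a central extension $1\to\mu_{n+1}\to\widetilde G\to G\to 1$ together with a homomorphism $\widetilde\rho\colon\widetilde G\to\SL(V)$ lifting $\rho$. The point $p$ corresponds to a line $L\subset V$, and since $\widetilde\rho(\widetilde G)$ fixes $p$, the subspace $L$ is $\widetilde G$-stable, hence defines a character $\chi\colon\widetilde G\to\bG_m$.

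The key computation is the restriction $\chi|_{\mu_{n+1}}$: the subgroup $\mu_{n+1}\subset\widetilde G$, being the kernel of $\SL(V)\to\PGL(V)$, acts on $V$ by scalars via the tautological inclusion $\mu_{n+1}\hookrightarrow\bG_m$, hence acts on the line $L$ the same way, so $\chi|_{\mu_{n+1}}$ is that tautological character. Consequently $\mu_{n+1}$ acts trivially on $V':=V\otimes L^{-1}$, and since $\mu_{n+1}=\ker(\widetilde G\to G)$, this representation of $\widetilde G$ descends to a representation $\sigma\colon G\to\GL(V')$. Because twisting by a character is invisible in $\PGL$, the composite of $\sigma$ with $\GL(V')\to\PGL(V')=\PGL(V)$ recovers $\rho$; that is, $\rho$ lifts to the genuine linear representation $\sigma$ of $G$, so the action is linear. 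By the definition of the obstruction in \eqref{alpha}, the existence of such a lift forces $\alpha(\rho)=0$.

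Finally, to get strict linearity, observe that $L\otimes L^{-1}\subset V'$ is the canonically trivial $\widetilde G$-line, so it descends to a copy of the trivial representation $1\subset V'$. As $G$ is finite and the base field has characteristic zero, $V'$ is semisimple, so this trivial line splits off as a direct summand: $V'\cong W\oplus 1$ with $W=\bA^n$. Then $\bP(V')=\bP(W\oplus 1)$ exhibits $\rho$ as a strictly linear action. There is no serious obstacle; the only points demanding attention are the identification of $\chi|_{\mu_{n+1}}$ (which is precisely what makes the descent of $V'$ work) and the appeal to complete reducibility for the final splitting.
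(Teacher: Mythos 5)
Your argument is correct and is essentially the paper's own: the authors' ``one last observation'' preceding the proposition carries out exactly this lift to $\widetilde G\to\SL(V)$, forms $V\otimes L^{-1}$, and descends it to $G$. You merely make explicit the final splitting $V'\cong W\oplus 1$ via complete reducibility, which the paper leaves implicit.
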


If there is an equivariant embedding
$$X \hookrightarrow \bP^N$$
such that $\bP^N$ admits no fixed points then the same holds true for $X$.  

\subsection{Notions of rationality}
We say that the $G$-action on $X$ is {\em projectively linear} if $X$ admits a 
$G$-equivariant birational map to $\bP^n,n=\dim(X)$; it is {\em strictly linear} or {\em linear} if the 
$G$-action on $\bP^n$ has the same properties.  (We think of these as equivariant analogs of 
rationality over nonclosed fields.) When $G$ is abelian, the existence of a fixed point is a birational invariant of smooth projective $G$-varieties \cite[Appendix]{RYKS}. Thus for abelian actions, the existence of a fixed point is a necessary condition for strict linearity.  

The classification of rational $G$-varieties has been essentially settled in dimension two \cite{DI},
but is largely open in higher dimensions. The {\em birational} classification of finite group actions on projective space
remains a challenging and interesting problem \cite{KTprojective}.

Two strictly linear and generically-free actions of $G$ on projective space need not be $G$-birational
but we shall see that they are necessarily stably $G$-birational. The key ingredient is:
\begin{prop}
Suppose $X$ and $Y$ are smooth varieties
with generically-free $G$-actions. If there exist $G$-equivariant vector bundles $E\ra X$ and 
$F\ra Y$ such that $E$ and $F$ are equivariantly birational then $X$ and $Y$ are equivariantly stably
birational. 
\end{prop}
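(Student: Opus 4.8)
The plan is to reduce to the classical "no-name lemma" (the statement that a $G$-equivariant vector bundle over a variety with generically free action is equivariantly birational to a product with a vector space carrying the trivial or a fixed linear action). First I would recall why it suffices to treat the case of the bundles separately: if $E \to X$ is $G$-equivariantly birational to $X \times \bA^r$ as a $G$-variety (for a suitable $G$-action on the total space), and similarly $F \to Y$ is $G$-equivariantly birational to $Y \times \bA^s$, then chaining these with the hypothesized $G$-birational map $E \dashrightarrow F$ gives a $G$-birational map $X \times \bA^r \dashrightarrow Y \times \bA^s$, hence after passing to projective completions of the fibers a $G$-birational map $X \times \bP^r \dashrightarrow Y \times \bP^s$ with $G$ acting trivially on the second factors — which is exactly equivariant stable birationality.

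The heart of the argument is therefore the no-name lemma applied to $E \to X$. Here one uses that $G$ acts generically freely on $X$: over the generic point one forms the quotient, or better, one works over a dense $G$-invariant open $U \subseteq X$ on which $G$ acts freely, so that $U \to U/G$ is an étale (even Zariski-locally trivial after shrinking) $G$-torsor. A $G$-equivariant vector bundle on $U$ then descends to a vector bundle on $U/G$; after further shrinking $U/G$ this descended bundle is trivial, and pulling back the trivialization shows $E|_U \cong U \times \bA^r$ $G$-equivariantly, where $G$ acts on $\bA^r$ through the descent data — but since we only need birational statements we may absorb this into the identification, and in any case passing to $\bP^r$ the $G$-action on the fiber is linear and can be compared to the trivial one stably. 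The key technical point to get right is that "generically free" gives a free action on a dense open subset, which is where the smoothness and the hypothesis on the $G$-action enter; this is standard but should be cited or spelled out.

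The main obstacle I expect is bookkeeping about what $G$-action survives on the fibers $\bA^r$, $\bA^s$: the descent of $E|_U$ produces a bundle whose fiberwise $G$-structure is a priori a nontrivial linear representation rather than the trivial one. The resolution is precisely the remark already made in the excerpt just before this proposition — that two generically free strictly linear actions on projective spaces are tautologically stably $G$-birational — so after projectivizing we are comparing $X \times \bP^r$ and $Y \times \bP^s$ up to further stabilization, which is harmless. Thus the proof is: (1) shrink to a free open set and descend each bundle to get $G$-birational maps $E \dashrightarrow X \times \bA^r$ and $F \dashrightarrow Y \times \bA^s$; (2) compose with the given $E \dashrightarrow F$ to get $X \times \bA^r \dashrightarrow Y \times \bA^s$; (3) projectivize the fibers and invoke stable $G$-birationality of linear actions on projective spaces to conclude $X$ and $Y$ are equivariantly stably birational.
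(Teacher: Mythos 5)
Your proposal is correct and follows essentially the paper's route: the paper simply cites the No-name Lemma, which already asserts that $E$ is $G$-equivariantly birational to $\bA^{\operatorname{rank}(E)}\times X$ over $X$ \emph{with trivial action on the affine factor}, and then composes exactly as you do; your descent argument over a free open set is the standard proof of that lemma. One caution: your worry about a residual linear action on the fibers is unfounded (the trivialization pulled back from $U/G$ is $G$-invariant, so the fiber action is genuinely trivial), and the fallback you propose---invoking that strictly linear actions on projective spaces are tautologically stably $G$-birational---would be circular, since the paper derives that very remark as a consequence of this proposition.
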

This is a corollary of the `No-name Lemma' \cite[\S 4.3]{ChGR}: $E$ is $G$-equivariantly
birational to $\bA^{\operatorname{rank}(E)} \times X$ over $X$, where the affine factor has trivial $G$-action.
One can find examples of $X$ and $Y$ that are not $G$-birational using the invariants of  \cite{ReichsteinYoussin}.

As a corollary, the following notions of $G$-equivariant stable birational equivalence coincide:
\begin{itemize}
\item{$X \times \bA^m$ and $Y\times \bA^n$ are $G$-equivariantly birational, where the affine spaces
have trivial $G$-actions;}
\item{$X \times V$ and $Y\times W$ are $G$-equivariantly birational, where $V$ and $W$ are linear
representations of $G$;}
\item{$X\times \bP^m$ and $Y \times \bP^n$ are $G$-equivariantly birational, where the actions on the projective spaces
are strictly linear or admit a fixed point.}
\end{itemize}
The last statement follows from Proposition~\ref{prop:fixstrict}.

\subsection{Picard and Brauer groups}
\label{subsect:PBG}
We continue to assume that $G$ is a finite group acting regularly on a smooth projective variety $X$. 
We refer the reader to \cite{BrionLinearization}, \cite{KKV} and \cite{KKLV} for background on line bundles and group actions.

A {\em $G$-linearized line bundle} $L\rightarrow X$ consists of a line bundle $L$ over $X$ and an action of $G$ on $L$, compatible with the action on $X$,
such that the induced action on the fibers is linear.  It follows that $\Gamma(X,L^{\otimes N}), N\in \bZ,$ is naturally a representation of $G$. Conversely, if $X\hookrightarrow \bP^n$ is $G$-equivariant with $G$ acting linearly on $\bP^n$ then $L=\cO_{\bP^n}(-1)|X$ -- the restriction of the universal line on $\bP^n$
to $X$ -- has a natural linearization. The {\em equivariant Picard group} $\Pic_G(X)$ parametrizes $G$-linearized line bundles on $X$, up to equivariant isomorphism. We have an exact sequence cf.~\cite[Th.~1]{FW1}
\begin{equation}0 \rightarrow G^\vee \rightarrow \Pic_G(X) \rightarrow \Pic(X)^G \rightarrow \rH^2(G,k^\times) \label{ES1}
\end{equation}
where $G^{\vee}=\Hom(G,k^{\times})$.
Given a $G$-equivariant $X \hookrightarrow \bP^n$ with $G$ acting projectively on $\bP^n$, the final coboundary morphism 
applied to $\cO_{\bP^n}(-1)|X$ vanishes precisely when the class $\alpha=0$ (see (\ref{alpha}).
In particular, for $L\in \Pic(X)^G$ some power $L^{\otimes N},N\neq 0,$ admits a linearization because $\rH^2(G,k^{\times})$ is a torsion group.

Let $\Br_G(X)$ denote the equivariant Brauer group of $X$, following Fr\"ohlich and Wall \cite{FW1,FW2}.  It parametrizes Azumaya algebras
$\cA \rightarrow X$ with $G$-action, compatible with the action on $X$ and linear over the fibers.  (Azumaya algebras over commutative rings are
direct generalizations of central simple algebras over fields.) We mod out by those of the form 
$\End(\cE)$, where $\cE$ is a $G$-equivariant locally-free sheaf on $X$. It may be computed with a Hochschild-Serre type spectral sequence
cf.~\cite[\S 4]{FW2}, with graded pieces
\begin{align*}
\operatorname{coker} \bigl( \Pic(X)^G &\rightarrow \rH^2(G,k^\times)\bigr) \\
\ker \bigl(\rH^1(G,\Pic(X)) &\rightarrow \rH^3(G,k^\times) \bigr) \\  
\ker \Bigl( \ker \bigl( \Br(X)^G \rightarrow \rH^2(G,\Pic(X)) \bigr)  & \rightarrow  \operatorname{coker} \bigl(\rH^1(G,\Pic(X)) \rightarrow \rH^3(G,k^\times) \bigr) \Bigr).
\end{align*}
Part of this is summarized in the extension to (\ref{ES1}) \cite[Th.~1]{FW1}:
\begin{align*}
0 \rightarrow& G^\vee \rightarrow \Pic_G(X) \rightarrow \Pic(X)^G \rightarrow \rH^2(G,k^\times) \ra \\
                    & \ker(\Br_G(X) \rightarrow \Br(X)) \rightarrow \rH^1(G,\Pic(X)) \rightarrow \rH^3(G,k^{\times}).
\end{align*}
\begin{prop} \label{prop:projectiveBrauer}
Let $V$ be a finite-dimensional linear representation of $G$ over $k$. Then the homomorphism
\begin{equation} \label{eqn:pull}
\rH^2(G,k^{\times}) = \Br_G(\mathrm{point}) \rightarrow \Br_G(\bP(V)),
\end{equation}
induced by the structure morphism, is an isomorphism.

Suppose that $\rho:G \rightarrow \PGL(V)$ is a projective representation. 
Then the kernel of (\ref{eqn:pull}) contains $\alpha(\rho)$.
\end{prop}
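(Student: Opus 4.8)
The plan is to treat the two assertions by different routes: the Hochschild--Serre filtration recalled before the proposition for the isomorphism, and an explicit $G$-equivariant locally free sheaf on $\bP(V)$ for the containment.

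\emph{The isomorphism.} I would specialize the three graded pieces listed above to $X=\bP(V)$ and dispose of each. Since $V$ is a genuine representation, $\cO_{\bP(V)}(-1)$ carries a natural $G$-linearization (as in the discussion preceding Proposition~\ref{prop:projectiveBrauer}); hence $G$ acts trivially on $\Pic(\bP(V))=\bZ\cdot\cO_{\bP(V)}(1)$ and the coboundary $\Pic(\bP(V))^G\to\rH^2(G,k^\times)$ kills the ample generator, so it is zero. The first graded piece is therefore all of $\rH^2(G,k^\times)$. The second is a subgroup of $\rH^1(G,\Pic(\bP(V)))=\rH^1(G,\bZ)=\Hom(G,\bZ)=0$, and the third is a subgroup of $\Br(\bP(V))^G$, which vanishes because $\Br(\bP(V))=\Br(k)=0$ for $k$ algebraically closed. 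Consequently $\Br_G(\bP(V))=F^2\Br_G(\bP(V))=E_\infty^{2,0}=\rH^2(G,k^\times)$. It remains to check that (\ref{eqn:pull}) is this isomorphism and not an unrelated one: the structure morphism $p\colon\bP(V)\to\mathrm{point}$ induces a morphism of Hochschild--Serre spectral sequences which on $E_2^{2,0}$ is the identity of $\rH^2(G,k^\times)$ (being induced by the identity $k^\times\to k^\times$ of constant units), and since the differential out of $E_2^{0,1}=\Pic(\bP(V))^G$ vanishes, $E_2^{2,0}=E_\infty^{2,0}$ for $\bP(V)$; as both filtrations consist of the single bottom step, $p^*$ is exactly the displayed isomorphism.

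\emph{The containment.} Under $\rH^2(G,\bG_m)=\Br_G(\mathrm{point})$ the class $\alpha(\rho)$ is represented by the Azumaya algebra $\End(V)$ with $G$ acting by conjugation through $\rho$: an action of $G$ on $\mathrm{Mat}_N(k)$ by algebra automorphisms is exactly a homomorphism $G\to\PGL_N$, and two such differ by $\End$ of a representation precisely when their difference lifts to $\GL_N$. Choose the central extension $1\to\bG_m\to\widehat G\to G\to1$ and the lift $\widetilde\rho\colon\widehat G\to\GL(V)$ of $\rho$ from \S\ref{subsect:linearaction}. Then $V\otimes\cO_{\bP(V)}$ and its tautological line subbundle $\cO_{\bP(V)}(-1)$ are $\widehat G$-equivariant, with the central $\bG_m$ acting with weight $1$ on each; hence $\cE:=\mathcal{H}om\bigl(\cO_{\bP(V)}(-1),\,V\otimes\cO_{\bP(V)}\bigr)\cong V\otimes\cO_{\bP(V)}(1)$ is $\widehat G$-equivariant with $\bG_m$ acting trivially, so $\cE$ descends to a genuine $G$-equivariant locally free sheaf on $\bP(V)$ (carrying the $\rho$-action). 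The twist by $\cO_{\bP(V)}(1)$ cancels in $\End(\cE)\cong\End(V)\otimes\cO_{\bP(V)}$, and there the conjugation action is precisely the pullback along $p$ of $\End(V)$ with its conjugation action; therefore $p^*\alpha(\rho)=[\End(\cE)]=0$ in $\Br_G(\bP(V))$, so $\alpha(\rho)$ lies in the kernel of (\ref{eqn:pull}).

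The step I expect to be most delicate is the last one in the first part: one must confirm that the edge homomorphism of the Fr\"ohlich--Wall spectral sequence is literally the structure-map pullback (\ref{eqn:pull}), not merely an abstract isomorphism of two coincidentally equal groups --- a functoriality check for the construction of \cite[\S4]{FW2} along $p$. The containment, by contrast, is essentially formal, since conjugation annihilates the central scalars and so no obstruction survives the pullback to $\bP(V)$.
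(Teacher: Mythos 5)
Your proposal is correct and follows essentially the same route as the paper: the isomorphism is obtained by running the Fr\"ohlich--Wall/Hochschild--Serre filtration on $\bP(V)$ exactly as you do (your added functoriality check of the edge map is a worthwhile elaboration the paper leaves implicit). For the containment, the paper instead invokes the diagonal section of $\bP(V)\times\bP(V)\to\bP(V)$ together with Proposition~\ref{prop:fixstrict}, but unwinding that argument relatively produces precisely your bundle $V\otimes\cO_{\bP(V)}(1)$ as a genuine $G$-equivariant sheaf with $\End$ equal to $p^*\End(V)$, so the two arguments differ only in packaging, with yours making the Azumaya-algebra bookkeeping (identifying $\alpha(\rho)$ with the conjugation action on $\End(V)$) explicit.
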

\begin{coro} \label{coro:product}
Let $X$ be a smooth projective $G$-variety and $E\ra X$ a $G$-equivariant vector bundle.
Then the induced homomorphism
$$\Br_G(X) \rightarrow \Br_G(\bP(E))$$
is an isomorphism.
\end{coro}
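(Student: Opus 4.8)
The plan is to run the Hochschild--Serre spectral sequence recalled above for both $X$ and $\bP(E)$ and to compare them via pullback along $p\colon\bP(E)\to X$, which induces a morphism of such spectral sequences. The non-equivariant inputs I would use are the projective bundle formulas $\Pic(\bP(E))=p^*\Pic(X)\oplus\bZ\cdot[\cO_{\bP(E)}(1)]$ and $p^*\colon\Br(X)\xrightarrow{\ \sim\ }\Br(\bP(E))$ (the latter from the Leray sequence for $p$, using $R^1p_*\bG_m=\bZ$ and the vanishing of the relative Brauer group of a $\bP^{r-1}$-bundle). The extra ingredient that makes the equivariant comparison go through is that, because $E$ itself --- and not merely $\bP(E)$ --- is $G$-equivariant, the tautological quotient $p^*E\twoheadrightarrow\cO_{\bP(E)}(1)$ equips $\cO_{\bP(E)}(1)$ with a canonical $G$-linearization. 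Hence $\Pic(\bP(E))\cong\Pic(X)\oplus\bZ$ as $G$-modules with the extra summand carrying the trivial action, and the class $[\cO_{\bP(E)}(1)]\in\Pic(\bP(E))^G$ lies in the image of $\Pic_G(\bP(E))$, so it maps to $0$ in $\rH^2(G,k^\times)$.

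Granting this, I would compare the three graded pieces of $\Br_G$. The first, $\operatorname{coker}(\Pic(\bullet)^G\to\rH^2(G,k^\times))$, is unchanged: the new summand $\bZ=\langle[\cO_{\bP(E)}(1)]\rangle$ of $\Pic(\bP(E))^G$ dies in $\rH^2(G,k^\times)$ by the linearizability just noted. The second, $\ker(\rH^1(G,\Pic(\bullet))\to\rH^3(G,k^\times))$, is unchanged because $\rH^1(G,\bZ)=\Hom(G,\bZ)=0$, so $\rH^1(G,\Pic(\bP(E)))=\rH^1(G,\Pic(X))$; and in both cases $p^*$ visibly realizes the identification.

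The main obstacle is the third piece, which involves $\rH^2(G,\Pic(\bP(E)))=\rH^2(G,\Pic(X))\oplus\rH^2(G,\bZ)$ with $\rH^2(G,\bZ)\cong G^\vee$ in general nonzero. The worry is that the map $\rH^0(G,\Br(\bP(E)))\to\rH^2(G,\Pic(\bP(E)))$ could have nonzero component into this extra summand and thereby make its kernel strictly smaller than the corresponding kernel for $X$. I expect this to be the one step needing care, and it should be dispatched by functoriality: $p^*\colon\Br(X)\to\Br(\bP(E))$ is a $G$-equivariant isomorphism, so every class in $\rH^0(G,\Br(\bP(E)))$ is $p^*b$; the spectral sequence differential commutes with $p^*$; and $p^*\colon\rH^2(G,\Pic(X))\to\rH^2(G,\Pic(\bP(E)))$ is the inclusion of the first summand. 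Therefore the differential applied to $p^*b$ has trivial $\rH^2(G,\bZ)$-component, so the kernel for $\bP(E)$ is exactly $p^*$ of the kernel for $X$. Since $\rH^3(G,k^\times)$ and the further map to $\operatorname{coker}(\rH^1(G,\Pic(\bullet))\to\rH^3(G,k^\times))$ are again identified by $p^*$ (using $\rH^1(G,\bZ)=0$), the third piece is matched too. Finally $p^*$ respects the Hochschild--Serre filtration on $\Br_G$ and is an isomorphism on every associated graded piece, so by the five lemma it is an isomorphism on $\Br_G$. This in particular recovers the first assertion of Proposition~\ref{prop:projectiveBrauer}, the case $X=\mathrm{point}$ with $E=V$.
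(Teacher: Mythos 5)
Your argument is correct, and it reaches the conclusion by a route that differs in organization from the paper's. The paper disposes of the corollary in one line by computing the (equivariant) \'etale Leray spectral sequence of the projective bundle $p\colon\bP(E)\to X$ and feeding in Proposition~\ref{prop:projectiveBrauer} as the fiberwise input; you instead run the Fr\"ohlich--Wall/Hochschild--Serre spectral sequence over the geometric $\Pic$ and $\Br$ separately for $X$ and for $\bP(E)$ and match the three graded pieces via the morphism of spectral sequences induced by $p^*$. The essential inputs are the same in both treatments --- the projective bundle formulas for $\Pic$ and $\Br$, and the fact that equivariance of $E$ itself (not merely of $\bP(E)$) linearizes $\cO_{\bP(E)}(1)$ and so kills its image in $\rH^2(G,k^\times)$ --- but your version makes explicit the two points the paper leaves implicit: that $\rH^1(G,\bZ)=0$ handles the middle graded piece, and that functoriality of the $d_2$ and $d_3$ differentials under $p^*$ prevents the extra summand $\rH^2(G,\bZ)\cong G^\vee$ from shrinking the top graded piece. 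What the paper's Leray route buys is that the whole relative computation is packaged into a single already-proved fiberwise statement; what your route buys is that it needs no equivariant Leray formalism, proceeds entirely from the displayed graded pieces, and recovers the first assertion of Proposition~\ref{prop:projectiveBrauer} as the special case $X=\mathrm{point}$ rather than using it as an ingredient.
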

\begin{proof}[Proof of Proposition~\ref{prop:projectiveBrauer} and Corollary~\ref{coro:product}]
The first statement of the proposition follows from applying the Hochschild-Serre formalism to $\bP(V)$.
We are applying $\Pic(\bP(V))=\bZ$ (with trivial $G$ action), $\Br(\bP(V))=0$, and exact sequence (\ref{ES1}).  
For the second, note that projection
$$\bP(V) \times \bP(V) \rightarrow \bP(V)$$
admits the diagonal section so Proposition~\ref{prop:fixstrict} gives the vanishing of $\alpha(\rho)$ on
pullback to $\bP(V)$.  The corollary follows by computing the \'etale Leray spectral sequence for $\bP(E)\rightarrow X$,
using the vanishing underlying Proposition~\ref{prop:projectiveBrauer}.
\end{proof}
\begin{prop}$\Br_G$ is an equivariant stable birational invariant of smooth projective $G$-varieties. 
\end{prop}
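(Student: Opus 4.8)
The plan is to split the assertion into a ``stable'' part, supplied directly by Corollary~\ref{coro:product}, and a genuinely $G$-birational part, which I would handle by $G$-equivariant weak factorization together with a single blow-up computation inside the Hochschild--Serre formalism of \S\ref{subsect:PBG}.

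First I would reduce to $G$-birational invariance. By the equivalent characterizations of equivariant stable birationality recorded after Proposition~\ref{prop:fixstrict}, smooth projective $G$-varieties $X,Y$ are equivariantly stably birational if and only if $X\times\bP^m$ and $Y\times\bP^n$ are $G$-birational for projective spaces carrying strictly linear actions, say $\bP^m=\bP(V)$ and $\bP^n=\bP(W)$ with $V,W$ representations of $G$. Writing $X\times\bP(V)=\bP(E)$ for the $G$-equivariant vector bundle $E=\cO_X\otimes V$, Corollary~\ref{coro:product} gives $\Br_G(X)\cong\Br_G(X\times\bP(V))$, and likewise for $Y$. Hence it suffices to prove that $\Br_G(X)\cong\Br_G(Y)$ whenever $X$ and $Y$ are joined by a $G$-equivariant birational map.

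Next I would invoke $G$-equivariant weak factorization (Abramovich--Karu--Matsuki--W\l odarczyk, in the equivariant form valid for finite $G$ in characteristic zero): such a map factors as a composite of $G$-equivariant blow-ups and blow-downs with smooth $G$-invariant centers. It remains to treat one blow-up $\pi\colon\widetilde X=\Bl_Z X\to X$ along a smooth $G$-invariant $Z\subset X$, with exceptional divisor $E$, and to show that $\pi^*\colon\Br_G(X)\to\Br_G(\widetilde X)$ is an isomorphism. The inputs are: $\pi$ is $G$-equivariant, hence induces a morphism of the Hochschild--Serre spectral sequences and of the three-step filtrations computing $\Br_G(X)$ and $\Br_G(\widetilde X)$; as $G$-modules, $\Pic(\widetilde X)=\pi^*\Pic(X)\oplus\bZ[E]$ with $G$ acting trivially on $\bZ[E]$, and $\pi_*$ is a $G$-equivariant retraction of $\pi^*$; and $\pi^*\colon\Br(X)\to\Br(\widetilde X)$ is an isomorphism of $G$-modules, by birational invariance of the Brauer group of a smooth projective variety (equivariance being automatic from functoriality of $\pi$). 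Feeding this through the three graded pieces of \S\ref{subsect:PBG}: in the first, $\cO(E)$ inherits a linearization since its ideal sheaf $\cO(-E)\subset\cO_{\widetilde X}$ is $G$-stable, so the extra summand $\bZ[E]$ maps to $0$ in $\rH^2(G,k^\times)$ and the cokernel is unchanged; in the second, the extra summand contributes only $\rH^1(G,\bZ)=\Hom(G,\bZ)=0$; in the third, $\rH^0(G,\Br(\widetilde X))=\rH^0(G,\Br(X))$, while the new summand sits in the target $\rH^2(G,\Pic(\widetilde X))$ as a direct summand not met by the unchanged image of the $d_2$-differential, so the inner kernel is preserved, and the outer condition again lands in the same $\operatorname{coker}\bigl(\rH^1(G,\Pic)\to\rH^3(G,k^\times)\bigr)$. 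Since $\pi^*$ induces an isomorphism on every graded piece of a finite exhaustive filtration, it is an isomorphism.

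The step I expect to be the main obstacle is the bookkeeping in the third graded piece: one must verify, using functoriality of the spectral sequence, that $\pi^*$ carries the differential $\rH^0(G,\Br(X))\to\rH^2(G,\Pic(X))$ onto the corresponding differential for $\widetilde X$ followed by the inclusion $\rH^2(G,\Pic(X))\hookrightarrow\rH^2(G,\Pic(\widetilde X))$, so that the $\bZ[E]$-summand splits off cleanly; this leans on $\pi^*$ being an isomorphism on ordinary Brauer groups together with the Picard-group retraction $\pi_*$. A secondary point to nail down is the precise statement of equivariant weak factorization invoked.
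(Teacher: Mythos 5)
Your proof is correct and follows essentially the same route as the paper: handle the stable part via the projective-bundle statement (Proposition~\ref{prop:projectiveBrauer}, equivalently Corollary~\ref{coro:product}), then reduce by equivariant weak factorization to a single blow-up along a smooth $G$-invariant center and check that each of the three graded pieces of the Hochschild--Serre filtration is unchanged. The only point to tidy is that such a center $Z$ may be a disjoint union of components permuted by $G$, so the new Picard summand is the permutation module $\bigoplus_i \bZ[E_i]$ rather than a trivial $\bZ[E]$; this changes nothing, since the $G$-invariant class $[E]=\sum_i[E_i]$ is the class of the $G$-stable ideal sheaf $\cO(-E)$ and hence linearizes, and $\rH^1\bigl(G,\bigoplus_i \bZ[E_i]\bigr)=0$ by Shapiro's lemma, which is exactly how the paper phrases the middle graded piece.
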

The stable birational invariance of the Brauer group is well-known \cite[\S 5.2]{CTSko}.  
\begin{proof}
We first prove birational invariance.
By weak factorization, it suffices to prove the assertion for a blow up
$$
\Bl_Z(X) \rightarrow X,
$$
where $Z\subset X$ is smooth and irreducible as a $G$-variety, i.e., $G$ permutes the connected components transitively. 
In this situation, we observe that
\begin{itemize}
\item{The exceptional divisor $E$ is $G$-invariant and irreducible, thus the bottom graded piece of $\Br_G$ is unchanged.}
\item{The connected components of $E$ generate a permutation module for $G$, with trivial $\rH^1$, thus the 
middle graded piece is unchanged.}
\item{We have $\Br(\Bl_Z(X))=\Br(X)$ thus the top graded piece is unchanged.}
\end{itemize}

For the stable case, we need to verify that
$$\Br_G(X \times \bP^n) = \Br_G(X)$$
provided the $G$-action on $\bP^n$ is linear.  We compute this using the Leray spectral sequence associated
with the projection $X\times \bP^n \rightarrow X$. The vanishing used in the proof of Proposition~\ref{prop:projectiveBrauer}
gives the desired equality.  
\end{proof}

\begin{coro}
Let $X$ be a smooth projective $G$-variety.  Assume there is a $G$-equivariant embedding
$$X \hookrightarrow \bP^n$$
where the action of $G$ on $\bP^n$ is not linear. Then $X$ is not equivariantly birational to projective space
with a linear $G$-action.
\end{coro}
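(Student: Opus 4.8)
The plan is to argue by contradiction, using the equivariant Brauer group as an obstruction and exploiting the factorization of pullback along a structure morphism through $\Br_G(\bP^n)$. Write $\bP^n=\bP(V)$ with $V=k^{n+1}$; the hypothesis that the $G$-action on $\bP^n$ is not linear means precisely that the associated projective representation $\rho\colon G\to\PGL(V)$ does not lift to $\GL(V)$, i.e., $\alpha(\rho)\neq 0$ in $\rH^2(G,\bG_m)$ by the defining property of the class in (\ref{alpha}). Suppose, for contradiction, that $X$ is $G$-equivariantly birational to $\bP^m=\bP(W)$, $m=\dim X$, where $W$ is a genuine linear representation of $G$.

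The first step is to show that pullback along the structure morphism $\pi_X\colon X\to\mathrm{point}$ induces an \emph{injective} map $\pi_X^*\colon \Br_G(\mathrm{point})\to\Br_G(X)$. By Proposition~\ref{prop:projectiveBrauer}, the corresponding pullback $\pi_{\bP(W)}^*\colon\Br_G(\mathrm{point})\to\Br_G(\bP(W))$ is an isomorphism. By the stable $G$-birational invariance of $\Br_G$ established above, there is an isomorphism $\Br_G(X)\cong\Br_G(\bP(W))$; the point to check is that this isomorphism is compatible with the two structure-morphism pullbacks. I would verify this by unwinding the proof of invariance: weak factorization writes the $G$-birational map $X\dashrightarrow\bP(W)$ as a zig-zag of blow-ups along smooth $G$-centers, and at each stage the identification of Brauer groups used above is literally pullback along a morphism lying over $\mathrm{point}$ (a blow-down $\Bl_Z(Y)\to Y$, or a bundle projection $Y\times\bP^r\to Y$ from Proposition~\ref{prop:projectiveBrauer}); composing these intertwines $\pi_X^*$ with $\pi_{\bP(W)}^*$. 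Hence $\pi_X^*$ is an isomorphism, in particular injective.

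The contradiction is then immediate. The equivariant embedding $\iota\colon X\hookrightarrow\bP(V)$ satisfies $\pi_{\bP(V)}\circ\iota=\pi_X$, so functoriality of $\Br_G$ gives $\pi_X^*=\iota^*\circ\pi_{\bP(V)}^*$. The second assertion of Proposition~\ref{prop:projectiveBrauer} says $\alpha(\rho)\in\ker\bigl(\pi_{\bP(V)}^*\colon\Br_G(\mathrm{point})\to\Br_G(\bP(V))\bigr)$, hence $\alpha(\rho)\in\ker(\pi_X^*)$. Injectivity of $\pi_X^*$ forces $\alpha(\rho)=0$, contradicting non-linearity of the $G$-action on $\bP^n$. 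The main obstacle, and essentially the only nonformal point, is the compatibility of the stable birational invariance isomorphism with pullback from a point; everything else is functoriality of $\Br_G$ together with the two cited propositions. If one wishes to sidestep tracking the isomorphism explicitly, it would suffice to record, in the proof of the invariance proposition, that each identification of equivariant Brauer groups there is induced by an honest morphism over the base point, which yields the desired compatibility directly.
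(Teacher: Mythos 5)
Your proof is correct and follows the same route as the paper, which argues in one line that the kernel of $\Br_G(\mathrm{point})\to\Br_G(X)$ is a $G$-birational invariant, vanishing for linearly acting projective spaces, while the embedding into the nonlinearly acting $\bP^n$ forces $\alpha(\rho)\neq 0$ into that kernel. The compatibility of the birational-invariance isomorphism with pullback from the point, which you rightly flag as the only nonformal step and justify via weak factorization, is exactly the point the paper leaves implicit.
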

Indeed, we can factor
$$\Br_G({\mathrm{point}}) \rightarrow \Br_G(\bP^n) \rightarrow \Br_G(X)$$
and it suffices to exhibit nonzero elements in the kernel of the first homomorphism.  
However, the class $[\cO_{\bP^n}(1)] \in \Pic^G(\bP^n)$ 
maps to a nontrivial $\alpha \in \rH^2(G,k^{\times})$. The spectral sequence above shows that $\alpha$ is in 
the kernel of $\Br_G(\mathrm{point}) \rightarrow \Br_G(\bP^n)$.

\begin{rema}
The paper \cite[\S 6]{BCDP} introduces similar ideas via the {\em Amitsur subgroup}, defined as the image
of $\Pic(X)^G \rightarrow \rH^2(G,k^\times)$.

Much of this extends to a nonclosed field $k$, except for the interpretation of
$\Br_G(\Spec(k))$ as the group cohomology for $G$. Moreover, we 
would have to keep track of the Galois actions on $\mu_n$ and $\widetilde{G}$.
\end{rema}

\section{Algebraic geometry of pencils of quadrics}
\label{sect:geometry}

Here we assume that the ground field is algebraically closed of characteristic zero and all objects
are equivariant for the action of a finite group $G$. The discussion below is valid, with minor changes,
when the objects are defined over a field of characteristic zero. 

We start with a projective representation $\rho:G \ra \PGL_{2g+2}$ corresponding to a 
$G$-action on $\bP^{2g+1}$.  Let $\rho^*$ be the dual representation,
$\Sym^2(\rho^*)$ the symmetric square, and $\wedge^2(\Sym^2(\rho^*))$ its second
exterior power;
note that tensor powers of projective representations are well-defined as projective
representations. 

Consider a smooth complete intersection of two quadrics $X\subset \bP^{2g+1}$.
We may write 
$$
X=\{Q_1=Q_2=0\},
$$
where $Q_1$ and $Q_2$ are basis elements for the distinguished two-dimensional
subrepresentation of $\Sym^2(\rho^*)$ generating the ideal of $X$.
(The elements $Q_1$ and $Q_2$ need not be invariant under the action of $G$.) 
Now $\wedge^2(\Sym^2(\rho))$ has a fixed point -- the pencil -- so Proposition~\ref{prop:fixstrict} gives
$$\alpha(\wedge^2(\Sym^2 (\rho)))=4\alpha(\rho)=0.$$

We write
$$\cQ=\Bl_X(\bP^{2g+1})=\{t_1Q_1+t_2Q_2=0\} \subset \bP^{2g+1} \times \bP^1$$
for the corresponding pencil and 
$$q:\cQ \ra \bP^1$$
for the projection onto the second factor, a fibration in quadric hypersurfaces.

We recall fundamental results from \cite{ReidThesis} and \cite{Donagi}.
The singular members of the pencil $\cQ$ are given by the degeneracy locus
$$B=\{ \det(t_1Q_1+t_2Q_2)=0\} \subset \bP^1,$$
which consists of $2g+2$ distinct points $b_1,\ldots,b_{2g+2}$, each corresponding to a nodal 
fiber $\cQ_{b_i}$.  The action of $G$ induces a permutation of the $b_i$.  

Let $F_g(q) \ra \bP^1$ denote the relative variety of maximal isotropic subspaces 
of $q:\cQ \rightarrow \bP^1$, with Stein factorization
$$F_g(q) \ra C \ra \bP^1,$$
where the first arrow is smooth.  The double cover $C\ra \bP^1$ 
encodes the two connected components of the variety of maximal isotropic subspaces.  
The pullback of the point class on $\bP^1$ to $C$ is written $g^1_2$, an element of
$(\Pic^2(C))^G$.  
We have natural bijections between
\begin{itemize}
\item{the branch points $b_1,\ldots,b_{2g+2}$ of $C\rightarrow \bP^1$;}
\item{the nodes of members of the pencil $\cQ_t$.}
\end{itemize}
The points $b_i$ -- regarded as ramification points on $C$ -- generate a subgroup of $\Pic(C)$ presented as follows:
\begin{itemize}
\item{$2b_i=2b_j=g^1_2$ for all $i,j$;}
\item{$b_1+\cdots+b_{2g+2}=(g+1)g^1_2$.}
\end{itemize}
The elements $b_i-b_j$ generate $J(C)[2]$, the two-torsion of the Jacobian of $C$.

The relative variety of maximal isotropic subspaces $F_g(q)$ is isomorphic to the variety parametrizing
$(g-1)$-dimensional quadric hypersurfaces contained in $X$, which is stratified by rank
$$K_0(X) \subset K_1(X) \cdots \subset K_g(X)=F_g(q).$$

Consider the variety $F_{g-1}(X) \subset \Gr(g,2g+2)$ parametrizing $(g-1)$-dimensional linear subspaces contained in $X$. We have:
\begin{itemize} 
\item{$F_{g-1}(X)$ is a principal homogeneous space over the Jacobian $J(C)$.}
\item{There is a correspondence
$$
\begin{array}{ccc}
\widetilde{K_1(X)} & \ra & K_1(X) \\
  \downarrow &  & \\
F_{g-1}(X) \times F_{g-1}(X)  & & 
     \end{array}
$$
where the horizontal arrow is the double cover reflecting the support of elements of $K_1(X)$.}
\item{The correspondence induces a morphism
\begin{equation} \label{morphism:Wang}
\Sym^2(F_{g-1}(X)) \rightarrow \Pic^1(C)
\end{equation}
taking $K_1(X)$ to $C \subset \Pic^1(C)$, with fibers
Kummer varieties singular along the $2^{2g}$ points of $K_0(X)$ over each
point of $C$. Work of X.~Wang \cite{wang} establishes the formula
\begin{equation}
2[F_{g-1}(X)]=[\Pic^1(C)], \label{eqn:Wang}
\end{equation}
as principal homogenous spaces over the Jacobian $J(C)$ of $C$,
i.e., as elements of the Weil-Ch{\^a}telet group of $J(C)$.}
\item{$K_0(X)$ may be interpreted as a 
$J(C)[2]$-principal homogeneous space over $C$.
We have $2^{2g}(2g+2)$ distinguished points of $K_0(X)$ corresponding to the elements lying 
over the Weierstrass points of $C$.}
\item{The automorphisms of $X$ act faithfully on $K_0(X)$ but automorphisms lifting the hyperelliptic 
involution of $C$ fix the $2^{2g}(2g+2)$ distinguished points.}
\end{itemize}

We summarize the implications of the discussion above for the automorphisms:
\begin{prop}
We have an extension
$$1 \rightarrow J(C)[2] \rightarrow \Aut(X) \rightarrow \Aut(X,C) \rightarrow 1,
$$
where $\Aut(X,C)$ is the image of $\Aut(X) \rightarrow \Aut(C)$. Moreover, 
\begin{itemize}
\item $\Aut(X,C)$ contains the hyperelliptic involution $\iota$ in its center,
\item 
$\Aut(X,C)/\left<\iota\right> \subset \Aut(\bP^1)$ acts via permutation on the $2g+2$ branch points of  the cover 
$C\rightarrow \bP^1$,
and
\item the induced action of $\Aut(X,C)$ on $J(C)[2]$ is induced by this permutation action.
\end{itemize}
\end{prop}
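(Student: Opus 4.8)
The plan is to construct the homomorphism $\Aut(X)\to\Aut(C)$ geometrically, identify its kernel with $J(C)[2]$ via the torsor structure on $F_{g-1}(X)$, and then read off the remaining assertions from the geometry of the hyperelliptic cover $C\to\bP^1$. First, for $g\ge 2$ (the case $g=1$ being classical) we have $\Pic(X)=\bZ$, so any $\phi\in\Aut(X)$ is induced by a projective-linear automorphism of $\bP^{2g+1}$; since $X$ is cut out by quadrics, $\phi$ preserves $\rH^0(\bP^{2g+1},\cI_X(2))\cong k^2$, hence acts on the pencil base $\bP^1$, on $\cQ\to\bP^1$, on the relative variety of maximal isotropics $F_g(q)$, and — by functoriality of Stein factorization — on $C$. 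As $\Aut(X,C)$ is by definition the image of $\Aut(X)\to\Aut(C)$, the right-hand arrow of the sequence is automatically surjective.

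Next I would compute $N:=\ker(\Aut(X)\to\Aut(C))$. Because $C$, $J(C)=\Pic^0(C)$, $F_{g-1}(X)$ and the $J(C)$-torsor structure on $F_{g-1}(X)$ are all canonical, $\Aut(X)$ acts on $F_{g-1}(X)$ affine-linearly over its action on $J(C)$; since the latter factors through $\Aut(C)$, elements of $N$ act by honest translations, producing a homomorphism $N\to J(C)$, $\phi\mapsto a_\phi$. This should be injective: anything fixing $F_{g-1}(X)$ pointwise fixes $F_{g-1}(X)\times F_{g-1}(X)$, hence $\widetilde{K_1(X)}$, then $K_1(X)$ and the subscheme $K_0(X)$, and $\Aut(X)$ acts faithfully on $K_0(X)$. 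Its image should lie in $J(C)[2]$: the morphism $\Sym^2(F_{g-1}(X))\to\Pic^1(C)$ of (\ref{morphism:Wang}) is $\Aut(X)$-equivariant, so a translation by $a_\phi$ downstairs induces translation by $2a_\phi$ on $\Pic^1(C)$, which must be trivial since $N$ fixes $C$. For the reverse containment I would invoke the classical Reid--Donagi action of $J(C)[2]$ on $X$ — the even sign changes after simultaneously diagonalizing the pencil — which is compatible with translation by $J(C)[2]\subset J(C)$ on $F_{g-1}(X)$, hence lies in $N$ and surjects onto $J(C)[2]$. This yields $N\cong J(C)[2]$. I expect this to be the main obstacle: both the inclusion $J(C)[2]\subseteq N$ and its reverse require careful bookkeeping of how the canonical constructions of Section~\ref{sect:geometry} transform under $\Aut(X)$ — the Reid--Donagi realization of $2$-torsion as automorphisms compatible with translations on $F_{g-1}(X)$ on one side, and the $\Aut(X)$-equivariance of Wang's morphism (\ref{morphism:Wang})--(\ref{eqn:Wang}) on the other.

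Finally, for the displayed assertions: an automorphism of $X$ lying over $\mathrm{id}_{\bP^1}$ fixes each quadric in the pencil, so in diagonalizing coordinates it is a sign change; these form a group $(\bZ/2)^{2g+1}$ acting faithfully on $\bP^{2g+1}$ and mapping onto the preimage $\{\mathrm{id}_C,\iota\}$ of $\mathrm{id}_{\bP^1}$ in $\Aut(C)$ with kernel $N\cong(\bZ/2)^{2g}$; being of index two, the image is all of $\{\mathrm{id}_C,\iota\}$, so $\iota\in\Aut(X,C)$. The $\bP^1$ on which $\Aut(X,C)$ acts is $|g^1_2|$, and by the standard structure of hyperelliptic curves $\Aut(C)\to\Aut(\bP^1)$ has kernel $\langle\iota\rangle$ with image preserving the branch divisor; since the branch points are the nodes $b_1,\dots,b_{2g+2}$ of the pencil, $\Aut(X,C)/\langle\iota\rangle$ acts on them by permutations. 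And for $\sigma\in N$ the translation computation above shows $\phi\sigma\phi^{-1}$ acts on $F_{g-1}(X)$ as translation by $\phi_*(a_\sigma)$, so the conjugation action of $\Aut(X,C)$ on $N=J(C)[2]$ is the natural $\Aut(C)$-action on $J(C)[2]$; since $J(C)[2]$ is generated by the classes $b_i-b_j$ and elements of $\Aut(X,C)$ permute the Weierstrass points $b_i$ by the permutation just described (with $\iota$ acting trivially, as it must), this action is precisely the one induced by that permutation.
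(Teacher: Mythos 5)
Your proposal is correct and follows the same route the paper takes: the paper states this proposition as a summary of the preceding bulleted discussion (the $J(C)$-torsor structure on $F_{g-1}(X)$, Wang's relation via the morphism (\ref{morphism:Wang}), faithfulness of $\Aut(X)$ on $K_0(X)$, and the diagonal sign-change realization of the extension (\ref{eqn:2extension})), and those are exactly the ingredients you assemble. Your write-up simply makes explicit the bookkeeping (the translation homomorphism $N\to J(C)$, its injectivity, and the two-torsion constraint from the equivariance of the squaring map) that the paper leaves implicit by citing Reid, Donagi, and Wang.
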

\begin{rema}
Suppose we diagonalize the forms
$$Q_1= \sum_{i=1}^{2g+2} x_i^2 , \quad Q_2 = \sum_{i=1}^{2g+2} \lambda_i x_i^2.$$
The combinations $Q_2-\lambda_i Q_1$ correspond to the $b_i$. The $2$-elementary extension
\begin{equation}
1 \rightarrow J(C)[2] \rightarrow H \rightarrow \left< \iota\right>\rightarrow 1 \label{eqn:2extension}
\end{equation}
acts on $\bP^{2g+1}$ via diagonal $(2g+2)\times(2g+2)$ matrices with $\pm 1$ as entries. The image in the quotient 
$\left<\iota \right>$ encodes the determinant of the matrix. 
\end{rema}

\section{Principal homogeneous spaces for the Jacobian} \label{sect:PHS}
We maintain the assumptions of Section~\ref{sect:geometry}.  
\subsection{Abstracting the principal homogeneous space}
\label{subsect:APHS}
Let $C\rightarrow \bP^1$ be a hyperelliptic curve of genus $g$; 
in the equivariant context, we assume that the $G$-action on $C$ descends to a linear
action on $\bP^1$.  The hyperelliptic involution $\iota$
induces an involution
\begin{equation} \label{eq:iota}
\begin{array}{rcl}
\iota: \Pic^1(C)& \rightarrow &\Pic^1(C) \\
               D & \mapsto & g^1_2  - D.
\end{array}
\end{equation}
Now suppose that $F$ is a square root of $\Pic^1(C)$, meaning a $J(C)$ principal homogeneous space satisfying
Wang's relation (\ref{eqn:Wang}). The automorphisms of $F$ include translations by $J(C)[2]$ and transformations
$$x \mapsto \iota(x)+\tau,  \quad  \tau \in \Pic^1(C), 2\tau = g^1_2,$$
encoded by the extension (\ref{eqn:2extension}).  

Given $C$, does there exist a smooth complete
intersection of two quadrics $X\subset \bP^{2g+1}$ whose associated pencil yields $C$?
A necessary condition is the existence of a $J(C)$-principal homogeneous space $F$ satisfying Wang's relation (\ref{eqn:Wang}). 
However, this is not the only way such a variety may arise. Consider
a Brauer-Severi variety $P \subset \bP^{\binom{2g+3}{2}-1}$, realized geometrically as a $2$-Veronese
reimbedding of $\bP^{2g+1}$, and a pencil of hyperplane sections of $P$. The base locus $X$ is geometrically 
a complete intersection of two quadrics in $\bP^{2g+1}$ and gives rise to auxiliary varieties $F_{g-1}(X)$ and $C$
as above. However $X$ is generally not embeddable in $\bP^{2g+1}$; in equivariant terms, a $G$-action
$$G\times X \rightarrow X$$
may not linearize to $\bP^{2g+1}$.  
This is often the only obstruction:
\begin{prop} \label{prop:reconstruct}
Let $C\rightarrow \bP^1$ be a hyperelliptic curve of genus $g$. 
In the equivariant context, we assume the group acts with a fixed point outside the branch locus.

A square root $F\rightarrow \Pic^1(C)$, as $J(C)$ principal homogeneous spaces, exists if and only if there exists 
a codimension-two linear section
$$
X \subset P\subset \bP^{\binom{2g+3}{2}-1},
$$
where $P$ is a form of $\bP^{2g+1}$ realized as a Veronese variety.
Thus there exists
$$X \subset \bP^{2g+1},$$
with the group acting linearly in $\bP^{2g+1}$ in the equivariant context, if and only if we may choose $P$ such that $[P]=0 \in \rH^2(\bG_m)$.  
\end{prop}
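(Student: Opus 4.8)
The plan is to prove the displayed equivalence — existence of a square root $F$ of $\Pic^1(C)$ as $J(C)$-principal homogeneous spaces (that is, one with $2[F]=[\Pic^1(C)]$) against existence of a codimension-two linear section $X\subset P$ of a Veronese form of $\bP^{2g+1}$ whose associated curve is $C$ — and then to deduce the claim about $\bP^{2g+1}$ formally. The field case and the $G$-equivariant case run in parallel, using that the auxiliary objects of Section~\ref{sect:geometry} (the pencil $\cQ$, the degeneracy locus $B$, the double cover $C\to\bP^1$, the isotropic tower $K_0\subset\cdots\subset K_g=F_g(q)$, the variety $F_{g-1}(X)$, and the morphism (\ref{morphism:Wang})) are functorial in $X$ and so carry $k$-structures, resp.\ $G$-actions, induced from $X$; and that the hypothesis of a $G$-fixed point of $\bP^1$ outside $B$ guarantees, via Proposition~\ref{prop:fixstrict}, that $\bP^1$ has a genuine linear $G$-action and supplies a distinguished smooth fiber along which to rigidify the reconstruction below. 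One direction is immediate: given $X\subset P$, the variety $X_{\bar k}$ is a smooth complete intersection of two quadrics, the pencil of hyperplane sections of $P$ cutting out $X$ is canonical, so $C$ is its associated curve and $F:=F_{g-1}(X)$ is a $J(C)$-principal homogeneous space over $k$ (resp.\ $G$-equivariantly); the relation $2[F]=[\Pic^1(C)]$ of (\ref{eqn:Wang}) holds after base change to $\bar k$ and between objects already defined over $k$, hence holds over $k$.

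The substance is the converse: from $(C,F)$ one must produce $X\subset P$. The idea is to run the geometry of Section~\ref{sect:geometry} backwards, guided by Wang's relation. That relation supplies a canonical addition morphism $\Sym^2 F\to\Pic^1(C)$ (for $x,y\in F$ the class of $x+y$ lies in $2[F]=[\Pic^1(C)]$); its restriction over $C\subset\Pic^1(C)$, with the rank stratification reinstated, recovers the tower $K_0\subset\cdots\subset K_g=F_g(q)$ over $\bP^1$, i.e.\ the relative family of maximal isotropic subspaces of a quadric bundle $q\colon\cQ\to\bP^1$ with degeneracy locus $B$ and discriminant cover $C$. By the theory of even Clifford algebras (spinor bundles), a quadric bundle is recovered from this family, together with the given discriminant cover, up to the choice of a Clifford module; the obstruction to choosing one over $k$ (resp.\ $G$-equivariantly) rather than merely geometrically is a class in $\rH^2(G,\bG_m)$. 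A choice of module realizes $\cQ$ as a pencil of quadric sections of $\bP(W)\times\bP^1$, with $\bP(W)$ a form of $\bP^{2g+1}$ and base locus $X\subset\bP(W)$; the second Veronese embedding of $\bP(W)$ then realizes $P\supset X$, and $[P]\in\rH^2(G,\bG_m)$ equals the above obstruction, with associated curve $C$ and $F_{g-1}(X)\cong F$ by construction. (Equivalently, one descends $X_{\bar k}$ directly, assembling the descent datum in $\Aut(X_{\bar k})$ from the descent data on $C$ and on $F$ through the extension $1\to J(C)[2]\to\Aut(X)\to\Aut(X,C)\to 1$; the residual obstruction to linearizing $\cO_X(1)$ is again $[P]$.)

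Granting the equivalence, the $\bP^{2g+1}$-statement follows formally. If $X\subset\bP^{2g+1}=\bP(V)$ with $G$ acting linearly, the $G$-equivariant second Veronese $\bP(V)\hookrightarrow\bP(\Sym^2 V)$ exhibits its image $P$ as a Veronese form with $[P]=0$ containing $X$ as a codimension-two linear section, and the easy direction supplies $F=F_{g-1}(X)$. Conversely, if some realization $X\subset P$ has $[P]=0$, then vanishing of $[P]$ means the projective $G$-action on the form $P$ of $\bP^{2g+1}$ lifts to a linear representation $V$ (the discussion preceding Proposition~\ref{prop:fixstrict}), whence $P\cong\bP(V)$ $G$-equivariantly and $X\subset\bP(V)$ is a complete intersection of two quadrics with linear $G$-action; in the field case, with $\rH^2(G,\bG_m)$ replaced by $\Br(k)$, one gets $X\subset\bP^{2g+1}$ over $k$.

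The main obstacle is the converse reconstruction: assembling the abstract pair $(C,F)$ into an honest pair $X\subset P$. One must make precise that the relative family of maximal isotropic subspaces — equivalently, the even Clifford algebra together with the discriminant cover $C$ — determines the quadric bundle, and hence $X$ and its Veronese ambient $P$, up to a single Brauer twist pulled back from the point, and that Wang's relation is exactly the constraint matching the square roots $F$ with the isomorphism classes of such towers over $k$. The delicate part is the bookkeeping of which constituents of the structure are canonical, hence automatically descend / are $G$-equivariant, and which one carries the Brauer ambiguity $[P]$; the fixed point of $\bP^1$ outside $B$ is used precisely to remove every ambiguity save this last class.
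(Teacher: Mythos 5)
Your easy direction and the formal deduction of the final statement (Veronese re-embedding when the action is linear; lifting the projective action when $[P]=0$) are fine and match what the paper needs. But the substance of the proposition is the converse — producing $X\subset P$ from the pair $(C,F)$ — and there you give only a plan, not a proof, as you yourself concede in your last paragraph. The plan has real holes: (i) Wang's relation $2[F]=[\Pic^1(C)]$ is an equality of classes in the Weil--Ch\^atelet group, so the ``canonical addition morphism'' $\Sym^2 F\to\Pic^1(C)$ is only defined up to translation by a rational (resp.\ $G$-fixed) point of $J(C)$, and you do not explain how to rigidify it or why the ambiguity is harmless; (ii) the claim that the even Clifford algebra plus the discriminant cover recovers the quadric bundle ``up to a single Brauer twist pulled back from the point'' is exactly the hard assertion — a priori the relevant Azumaya/Brauer data lives over $C$ or $\bP^1$, and one must argue that it is constant and moreover that the ambient projective bundle of the reconstructed pencil is a \emph{constant} form $P\times\bP^1$ rather than a nontrivial $\bP^{2g+1}$-bundle; (iii) you never use the fixed point outside the branch locus in any identifiable way, whereas it is structurally essential.

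The paper's proof takes a different and more concrete route, imported from Bhargava--Gross--Wang. The fixed point $p\in\bP^1$ with preimage $\{p',p''\}\subset C$ is used to define the generalized Jacobian $J_m(C)$, giving $0\to\mu_2\to J_m(C)[2]\to J(C)[2]\to 0$; via the \'etale algebra $L$ of the branch points one identifies $J_m(C)[2]\simeq(\mathbf{R}_{L/k}\mu_2)_{N=1}$ acting \emph{linearly} on $\bP^{2g+1}$ and $J(C)[2]$ as its quotient by $\mu_2$ acting \emph{projectively}. The existence of a square root $F$ is then the vanishing of the obstruction (the Steenrod square of $[\Pic^1(C)]$) in $\rH^2(J(C)[2])$ coming from $0\to J(C)[2]\to J(C)[4]\to J(C)[2]\to 0$, and this same cocycle, pushed through the projective action, is precisely the descent datum defining the Brauer--Severi form $P$; the residual class in $\operatorname{coker}\bigl(\rH^1(J(C)[2])\to\rH^2(\mu_2)\bigr)$ obstructing the lift to $J_m(C)[4]$ is identified with $[P]$, which is the second assertion. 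If you want to salvage your Clifford-algebra approach, you would have to prove the reconstruction statement in (ii) above from scratch; as written, the argument assumes the proposition's content rather than establishing it.
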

The cohomology group is $\rH^2(G,k^{\times})$ in the $G$-equivariant context and $\Br(k)$ over a nonclosed field $k$.  
\begin{proof}
We follow \cite{BGW}; the case of nonclosed fields is a corollary of their results.  
Given $p\in \bP^1$ ($k$-rational or $G$-fixed) that is not a branch point of $C\ra \bP^1$, write $p',p''\in C$ for the points over $p$.
We have an exact sequence for the generalized Jacobian of $C$ with respect to $\{p',p''\}$
\begin{equation} 0 \ra T \rightarrow J_m(C) \rightarrow J(C) \rightarrow 0 \label{eq:GJextension}
\end{equation}
where the first term 
$$T = (\mathbf{R}_{\{p',p''\}/\{p\}}\bG_m) / \bG_m.$$
Taking two-torsion gives
\begin{equation} 0 \ra \mu_2 \ra J_m(C)[2] \ra J(C)[2]\rightarrow 0. \label{gjacmod2}
\end{equation}
Suppose that $L$ is an \'etale algebra of degree $2g+2$ over $k$
associated with the branch points of $C\rightarrow \bP^1$.  We have \cite[Prop.~22]{BGW} identifications
\begin{align}
J_m(C)[2] &\Leftrightarrow (\mathbf{R}_{L/k} \mu_2)_{N=1} \\
J(C)[2] & \Leftrightarrow (\mathbf{R}_{L/k} \mu_2)_{N=1}/\mu_2
\end{align}
where $N:\mathbf{R}_{L/k} \ra \mu_2$ is the norm map from the restriction of scalars.  
These act linearly and projectively linearly on $\bP^{2g+1}$ respectively. 
The existence of $F$ is controlled by
$$0 \rightarrow J(C)[2] \rightarrow J(C)[4] \stackrel{\times 2}{\ra} J(C)[2] \rightarrow 0;$$
given $[\Pic^1(C)] \in \rH^1(J(C)[2])$, the obstruction to a square root $[F] \in \rH^1(J(C)[4])$ sits in
$$\rH^2(J(C)[2])=\rH^2((\mathbf{R}_{L/k} \mu_2)_{N=1}/\mu_2),$$
i.e., the Steenrod square of $[\Pic^1(C)]$. 
The vanishing of this class means $\bP^{2g+1}$ descends to a Brauer-Severi variety $P$.  
Moreover, the obstruction to producing $X\subset \bP^{2g+1}$ is controlled by \cite[Th.~24]{BGW}
$$0 \rightarrow J_m(C)[2] \rightarrow J_m(C)[4] \stackrel{\times 2}{\ra} J_m(C)[2] \rightarrow 0;$$
once $F$ exists, this is controlled via (\ref{gjacmod2}) by a class 
$$
\alpha \in \operatorname{coker}(\rH^1(J(C)[2])\rightarrow \rH^2(\mu_2)),
$$ 
where $\alpha\equiv [P]$ by the identifications.
\end{proof}

\begin{rema}What does this argument yield -- in the equivariant context -- when there is no fixed point?
Assume first that the cocycle in $\rH^2(G,J(C)[2])$ vanishes.  
Write $U\subset \bP^1$ for the complement of the branch points, with the induced $G$-action. The
generalized Jacobian $J_m(C)$ may still be defined over $U$ using (\ref{eq:GJextension}) with
$$T=(\mathbf{R}_{C\times_{\bP^1} U/U}\bG_m)/\bG_m.$$
We obtain 
$$
\xymatrix{
\cX  \ar@{^{(}->}[r] \ar[rd] & \cP \ar[d] \\
                                      & U
}
$$
where the vertical arrow is a Brauer-Severi fibration of relative dimension $2g+1$. If the cocycle in $\rH^2(G,J_m(C)[2])$ vanishes 
then the vertical arrow is a linear $\bP^{2g+1}$ fibration.
\end{rema}

\subsection{Projective geometry}
For the moment, we ignore the group action or assume the base field is algebraically closed. 
Recalling the imbedding $F_{g-1}(X) \subset \Gr(g,2g+2)$, we have
$$\cO_{\Gr(g,2g+2)}(1)|F_{g-1}(X) = \cO_{J(C)}(4\Theta),$$
where $\Theta$ is the class of a theta divisor.
Note however that the corresponding embedding is {\em not} linearly normal as
$$4^g = \dim \Gamma(\cO_{J(C)}(4\Theta)) > \dim \Gamma(\cO_{\Gr(g,2g+2)}(1)) = \binom{2g+2}{g}, \quad g>1.$$
For small $g$, we have 
$$
\begin{array}{r|cc}
g & 4^g & \binom{2g+2}{g} \\
\hline
1 & 4  & 4 \\
2 & 16 & 15 \\
3 & 64 & 56 \\
4 & 256 & 210 
\end{array}
$$

We explain the reason for this discrepancy. Suppose that $(J,\Theta)$ is a principally polarized abelian variety and 
$L$ is a line bundle on $J$ representing $\Theta$. For each $n\in \bN$, the Heisenberg extension associated with 
$n\Theta$
$$1 \rightarrow \bG_m \rightarrow \cG(L^n) \rightarrow J[n] \rightarrow 1$$
acts on the space of global sections $\Gamma(L^n)$. Recall that the extension data is given by the commutator
$$J[n] \times J[n] \rightarrow \mu_n \subset \bG_m$$
associated with the polarization form. Suppose that $n=4$ and realize $J[2] \subset J[4]$ in the standard way; the 
commutator pairing for $4\Theta$ is isotropic on $J[2]$, i.e., we may regard
$$\mu_2 \times J[2] \subset \cG(L^4)$$
as an abelian subgroup. Thus it is reasonable to diagonalize the theta functions for this group. Indeed, we have 
\begin{prop} \label{prop:BL} \cite[Ex.~6.10.1]{BLbook}
Let $\vartheta \in \Gamma(J,L)$ denote a generator and $\tau_x^*\vartheta$ its translate under $x\in J$. Then the
elements 
$$\{2^*\vartheta_x : x\in J[2] \}$$
form a basis for $\Gamma(J,L^4)$, naturally indexed by the $2$-torsion elements of $J$. (Here $2:J\ra J$ is multiplication by two.)
\end{prop}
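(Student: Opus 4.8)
The plan is to reduce the statement to a dimension count together with a linear-independence assertion, and to deduce the latter from the irreducibility of the Heisenberg representation of $\cG(L^4)$ on $\Gamma(J,L^4)$, using that $J[2]$ is a Lagrangian subgroup of $J[4]$ for the commutator pairing $e^{L^4}$ attached to $4\Theta$.

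I would begin with two harmless normalizations. Replacing $L$ by a translate under a suitable $2$-torsion point, I may assume $L$ is symmetric, so that $2^*L\cong L^4$ (the identity $n^*L\cong L^{n^2}$ for symmetric line bundles). Riemann--Roch on an abelian variety then gives $\dim\Gamma(J,L^4)=4^g\chi(L)=4^g$, since $h^0(L)=\chi(L)=1$ and $L^4$ is ample. As $|J[2]|=2^{2g}=4^g$, it suffices to prove that the $4^g$ sections $2^*\vartheta_x$, $x\in J[2]$, are linearly independent; and they do lie in $\Gamma(J,L^4)$, because for $x\in J[2]$ the line bundle $\tau_x^*L\otimes L^{-1}$ is $2$-torsion in $\Pic^0(J)$, whence $2^*(\tau_x^*L)\cong 2^*L\cong L^4$ canonically. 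The key reformulation: choosing $y\in J[4]$ with $2y=x$ (possible as $J$ is divisible), the identity $\tau_x\circ 2=2\circ\tau_y$ gives $2^*\vartheta_x=\tau_y^*(2^*\vartheta)$, and changing $y$ within its $J[2]$-coset leaves this section unchanged since $2\circ\tau_t=2$ for $t\in J[2]$. So $\{2^*\vartheta_x:x\in J[2]\}$ is precisely the set of $J[4]$-translates of the single section $2^*\vartheta$, one per coset of $J[2]$, hence has exactly $4^g$ elements.

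The main step is then a Stone--von Neumann argument. The extension $\cG(L^4)$ acts on $\Gamma(J,L^4)$, its unique irreducible representation on which $\bG_m$ acts by the identity character, of dimension $\sqrt{|J[4]|}=4^g$. Since $J[2]\subset J[4]$ is isotropic for $e^{L^4}$ (as observed just above the statement) and $|J[2]|^2=|J[4]|$, it is a Lagrangian; its preimage $\widetilde{J[2]}\subset\cG(L^4)$ is therefore abelian and normal. The line $\bC\cdot 2^*\vartheta$ is $\widetilde{J[2]}$-stable (again because $2\circ\tau_t=2$), affording a character $\eta$; translating by $k\in J[4]$, normality shows that $\bC\cdot\tau_y^*(2^*\vartheta)$ is again $\widetilde{J[2]}$-stable and affords $\eta$ twisted by $t\mapsto e^{L^4}(k,t)^{-1}$. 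Because $J[2]$ is Lagrangian, the homomorphism $J[4]/J[2]\to\Hom(J[2],\bG_m)$, $k\mapsto e^{L^4}(k,-)|_{J[2]}$, is injective, hence bijective by counting; thus the $4^g$ sections $2^*\vartheta_x$ span $4^g$ pairwise distinct $\widetilde{J[2]}$-eigenlines. Eigenvectors for distinct characters of an abelian group are linearly independent, so these $4^g$ sections form a basis of the $4^g$-dimensional space $\Gamma(J,L^4)$, naturally indexed by $x\in J[2]$.

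The one genuinely delicate point---where the computation actually happens---is the equivariant bookkeeping: one must check that translating $2^*\vartheta$, viewed inside $\Gamma(J,L^4)$ through the canonical isomorphism $2^*(\tau_x^*L)\cong L^4$, picks up exactly the twist by $e^{L^4}(k,-)$, i.e.\ that the two identifications of $\tau_y^*(2^*L)$ with $L^4$---one via $2\circ\tau_y=\tau_x\circ 2$, one via the Lagrangian lift $\widetilde{J[2]}$---differ by precisely that pairing value. An alternative avoiding the Heisenberg formalism replaces this with the isogeny decomposition $2_*\cO_J=\bigoplus_\xi P_\xi$ over the $2^{2g}$ torsion line bundles on $J$: by the projection formula $\Gamma(J,L^4)=\Gamma(J,2^*L)=\bigoplus_\xi\Gamma(J,L\otimes P_\xi)$, each summand is one-dimensional (indeed $L\otimes P_\xi\cong\tau_{x_\xi}^*L$ for the $2$-torsion point $x_\xi$ matching $\xi$ under the polarization) and is spanned by $2^*\vartheta_{x_\xi}$; the only nontrivial input is, again, the compatibility of these identifications. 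In any case this is \cite[Ex.~6.10.1]{BLbook}, and it can also be verified directly on $\bC^g/\Lambda$ by writing the $\vartheta_x$ in terms of theta functions with half-integral characteristics and noting that these transform under $\Lambda/2\Lambda$ by its $4^g$ distinct characters.
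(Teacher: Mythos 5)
The paper offers no proof of this proposition at all: it is quoted directly from Birkenhake--Lange \cite[Ex.~6.10.1]{BLbook}, so there is no in-paper argument to compare yours against. Your argument is essentially the standard one (Mumford's theta-group/Stone--von Neumann mechanism, or equivalently the isogeny decomposition) and is correct. Of your two routes, the projection-formula variant is the cleanest complete one: $\Gamma(J,2^*L)\simeq\Gamma(J,L\otimes 2_*\cO_J)=\bigoplus_\xi\Gamma(J,L\otimes P_\xi)$ over the $2$-torsion points $\xi$ of the dual, with each summand one-dimensional, identified with $\Gamma(J,\tau_{x_\xi}^*L)=\bC\cdot\vartheta_{x_\xi}$ and embedded by $s\mapsto 2^*s$; the direct-sum decomposition then yields linear independence for free and sidesteps the equivariant bookkeeping that you correctly flag as the delicate point of the Heisenberg argument (which you state but do not carry out). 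One small correction: translating $L$ by a $2$-torsion point cannot make it symmetric, since $\tau_c^*L$ is symmetric exactly when $\phi_L(2c)=(-1)^*L\otimes L^{-1}$, and for $c\in J[2]$ this forces $L$ to have been symmetric already. Symmetry of $L$ should instead be treated as part of the hypothesis (as it implicitly is in \cite{BLbook}): otherwise $2^*L\cong L^4\otimes\bigl((-1)^*L\otimes L^{-1}\bigr)$ and the sections $2^*\vartheta_x$ do not lie in $\Gamma(J,L^4)$ in the first place, so the statement itself would not parse.
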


\

Assume that either $k$ is nonclosed or that all the varieties and constructions are $G$-equivariant.
For our application, we use the squaring map
$$F_{g-1}(X) \rightarrow \Pic^1(C)$$
introduced in (\ref{morphism:Wang}); thus Proposition~\ref{prop:BL} applies. 

Consider the canonical theta divisor $\vartheta=\Sym^{g-1}(C) \subset \Pic^{g-1}(C)$. 
We analyze the translates of $\vartheta$ by elements in 
$$\left<b_1,\ldots,b_{2g+2}\right> \subset \Pic(C)$$
contained in $\Pic^1(C)$. 
These have the structure of a principal homogeneous space for $J(C)[2]$.
The Galois action on $\left<b_1,\ldots,b_{2g+2}\right>$
factors through the permutation representation on the branch points.  

Suppose first that $g$ is even; here the principal homogeneous space is trivial with distinguished divisor 
$$\vartheta - \frac{g-2}{2} g^1_2,$$
i.e., the canonical theta divisor translated by the $g^1_2$. Recall that $J(C)[2]$ corresponds to even partitions
$$S \sqcup S^c = \{b_1,\ldots,b_{2g+2} \}, \quad |S|=2j.$$
These are counted via the combinatorial identity for even $g$
$$4^g = \binom{2g+2}{g} + \sum_{j=0}^{g/2-1} \binom{2g+2}{2j}.$$
The sections of 
$$\Gamma(F_{g-1}(X),\cO_{F_{g-1}(X)}(1))$$
correspond to translates associated with sums of $g$ branch points $b_1,\ldots,b_{2g+2}$.  

\begin{rema}
For even $g$, any square root $F$ of $\Pic^1(C)$ admits a distinguished polarization of type $4\Theta$.  
However, given a projective representation $\rho:G \ra \PGL(V)$ note that 
$$\alpha(\wedge^g \rho) = g \alpha(\rho).$$
Thus two-torsion $\alpha(V) \in \rH^2(\bG_m)$ vanishes on passage from $X\subset \bP(V)$ to $F_{g-1}(X)\subset \bP(\wedge^g V)$.
\end{rema}

Now take $g$ odd.  The odd-degree divisors in $\left<b_1,\ldots,b_{2g+2}\right>$ -- a principal homogeneous space of $J(C)[2]$ -- correspond to 
odd partitions
$$S \sqcup S^c =\{b_1,\ldots,b_{2g+2} \}, \quad |S|=2j-1.$$
These index translates of $\vartheta$ pulling back to our desired polarization on $F_1(X)$. 
We have the combinatorial identity for odd $g$
$$4^g = \binom{2g+2}{g} + \sum_{j=1}^{(g-1)/2} \binom{2g+2}{2j-1}.$$
Again, the sections correspond to translates associated with sums of $g$ branch points.  

\begin{rema}For odd $g$, any square root $F$ of $\Pic^1(C)$ has a (Galois or $G$)
invariant divisor class: the pull back of the divisors 
$$\vartheta - \sum_{j\in J} b_j - \frac{g-2j-1}{2}g^1_2  \in \Pic^1(C), \quad |J|=2j-1,$$
to $F$. However, there may be an obstruction to the existence of a line bundle (defined over $k$
or linearized for $G$) on $X$ realizing this class.  Nonzero two-torsion $\alpha(V) \in \rH^2(\bG_m)$ remains nonzero on passage from $X\subset \bP(V)$ to $F_{g-1}(X)\subset \bP(\wedge^g V)$.
\end{rema}

 \section{Lagrangian interpretation in dimension three}
\label{sect:lagrangian}

In Section~\ref{subsect:APHS} we discussed how to recover a 
smooth complete intersection of two quadrics $X \subset \bP^{2g+1}$
from the associated hyperelliptic curve $C$, principal homogeneous space $J(C) \times F \ra F$, and additional
cohomological data. We present a geometric framework for these reconstruction
results when $g=2$.

The relationship between hyperk\"ahler manifolds $Y$ and Fano varieties arising as Lagrangian submanifolds
is rich and intricate. Lagrangian $\bP^n \subset Y$ can be characterized via intersection properties of the
Hodge lattice of $Y$ \cite{BHT,HTMMJ}. Further subtle constructions have been studied in \cite[\S 1.1]{FMOS}. 
For example, cubic fourfolds arise as Lagrangian submanifolds of hyperk\"ahler varieties of dimension eight
\cite{LLSvS}.   

We saw in Section~\ref{sect:geometry} that Kummer varieties arise naturally in the study of $X\subset \bP^{2g+1}$.
For $g=2$, Kummer {\em surfaces} take center stage 
but generalized Kummer {\em sixfolds} are most relevant for recovering $X$. We realize
$X$ naturally as a Lagrangian submanifold of a Kummer sixfold naturally arising from the variety
of lines $F_1(X)$. Recovering $X$ from $F=F_1(X)$ boils down to understanding certain Lagrangian subvarieties in
this Kummer sixfold.

\subsection{The basic construction}
Assume that the ground field is algebraically closed.

Let $X \subset \bP^5$ denote a smooth complete intersection of two quadrics, 
$F_1(X)$ its variety of lines, 
and $\Alb(F_1(X))$ the associated principally polarized abelian surface.
The variety of conics on $X$ equals the variety $F_2(q)$ in Section~\ref{sect:geometry}. Thus 
it fibers over a genus two curve $C$, parametrizing
connected components of the varieties of maximal isotropic subspaces in
the quadric hypersurfaces cutting out $X$. 
We may interpret $\Alb(F_1(X))\simeq J(C)$. 

Consider the Hilbert scheme $F_1(X)^{[4]}$ and the natural map
$$\begin{array}{rcl}
F_1(X)^{[4]} & \ra & J(C) \\
(\ell_1,\ell_2,\ell_3,\ell_4) & \mapsto & \ell_1+\ell_2+\ell_3+\ell_4 - h^2,
\end{array}
$$
where $h$ is the hyperplane class. Here $0$ is identified with cycles of lines obtained as
codimension-two linear sections of $X$.
The preimage of $0$ is a subvariety
$$\Kum(X) \subset F_1(X)^{[4]},$$
a twist of the generalized Kummer sixfold $K_{J(C)}(3)$ associated with $J(C)$.
Regarding $F_1(X)$ as a twist of $J(C)$ by a cocycle for 
$J(C)[4]$, applying this cocycle to $K_{J(C)}(3)$ yields $\Kum(X)$.

Consider the incidence variety
$$Z=\{[x,\ell]:x \in \ell \} \subset X \times F_1(X)$$
and the projection $\pi:Z \ra X$. This is generically finite
of degree four.
\begin{prop} The projection $\pi$ is flat over $X$.
\end{prop}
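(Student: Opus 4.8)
The statement asserts that $\pi : Z \to X$ is flat, where $Z = \{[x,\ell] : x \in \ell\} \subset X \times F_1(X)$ is the universal line and $\pi$ is the projection to $X$. The natural strategy is to recognize that flatness for a generically finite morphism onto a smooth base follows from the well-known criterion ("miracle flatness"): a finite-type morphism $f : Z \to X$ with $X$ regular, $Z$ Cohen–Macaulay, and all fibers of the same dimension, is automatically flat. So the plan is to verify these three hypotheses: that $X$ is regular (immediate, as $X$ is a smooth complete intersection of two quadrics in $\bP^5$), that $Z$ is Cohen–Macaulay, and that every fiber $\pi^{-1}(x)$ is finite (zero-dimensional) as a scheme — i.e., that no line on $X$ moves in a one-dimensional family through a fixed point.

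For the fiber-dimension condition, the key geometric input is that $F_1(X)$ is an abelian surface (it is a principal homogeneous space over $\Alb(F_1(X)) \simeq J(C)$, as recalled in the excerpt). Since $F_1(X)$ contains no rational curves, there is no $\bP^1$ of lines through any point $x \in X$; concretely, the lines on $X$ through a fixed point $x$ form the intersection of $F_1(X)$ with a sub-Grassmannian (lines through $x$ inside $\bP^5$), and one checks this intersection is everywhere finite — equivalently, through each point of $X$ there pass only finitely many lines (in fact exactly four, counted with multiplicity, since $\pi$ has degree four, but only finiteness is needed here). So $\pi$ is quasi-finite, hence all fibers are zero-dimensional, and in particular $\dim Z = \dim X = 3$.

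For the Cohen–Macaulay condition on $Z$: $Z \to F_1(X)$ realizes $Z$ as a $\bP^1$-bundle over the smooth surface $F_1(X)$ (the projectivization of the universal subbundle restricted to $F_1(X)$), so $Z$ is smooth of dimension three; in particular it is Cohen–Macaulay. Thus "miracle flatness" applies directly: $\pi : Z \to X$ is a finite-type map from a Cohen–Macaulay scheme of dimension three to a regular scheme of dimension three, with all fibers zero-dimensional, hence flat.

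The only genuine point requiring care — the main obstacle — is the verification that every fiber of $\pi$ is finite, since flatness would fail precisely at a point through which a positive-dimensional family of lines passes. This is where the structure of $F_1(X)$ as an abelian surface is essential: the absence of rational curves on $F_1(X)$ forbids one-parameter families of lines through a fixed point. One should phrase this carefully, e.g. by noting that the lines through $x$ contained in $X$ are cut out on $F_1(X)$ by the conditions defining "passing through $x$", which are linear in the Plücker coordinates, so the locus is a closed subscheme of $F_1(X)$ that maps to a point; were it positive-dimensional it would contain a curve, and projecting away from $x$ would exhibit this as a rational or at worst uniruled family, contradicting that every subvariety of the abelian surface $F_1(X)$ of dimension $\geq 1$ has no such structure through a fixed sub-locus. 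Once finiteness of fibers is established, the rest is the standard flatness criterion.
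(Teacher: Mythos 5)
Your overall reduction is the same as the paper's: both arguments invoke miracle flatness ($X$ regular, $Z$ Cohen--Macaulay -- indeed smooth, being a $\bP^1$-bundle over $F_1(X)$ -- so it suffices to show $\pi$ is equidimensional), and both then must rule out a positive-dimensional family of lines through a point $x\in X$. The divergence, and the gap, lies in how you rule this out. Your key claim is that a positive-dimensional fiber of $\pi$ would produce a rational (or uniruled) curve inside the abelian surface $F_1(X)$; but the justification you offer -- ``projecting away from $x$ would exhibit this as a rational or at worst uniruled family'' -- does not establish this. Projection from $x$ collapses each line of the family to a point and simply recovers the parametrizing curve $D\subset F_1(X)$, which a priori could have any genus; likewise, observing that the conditions of passing through $x$ are linear in Pl\"ucker coordinates only shows that $\pi^{-1}(x)$ is a linear section of $F_1(X)$, and linear sections of abelian surfaces are certainly not rational in general. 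So the crucial step, as written, fails.

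The claim is nevertheless true, and your strategy can be completed: writing a line through $x$ as $\{x+tv\}$ and expanding $Q_i(x+tv)=tB_i(x,v)+t^2Q_i(v)$, the lines through $x$ contained in $X$ are parametrized by the intersection of the two conics $\{Q_1(v)=Q_2(v)=0\}$ inside the plane $\{B_1(x,v)=B_2(x,v)=0\}$ (a genuine $\bP^2$ inside the $\bP^4$ of directions, because $X$ is smooth at $x$). A positive-dimensional component is therefore a common component of two plane conics, i.e.\ a line or a conic -- a rational curve -- which cannot embed in the abelian surface $F_1(X)$. The paper instead avoids $F_1(X)$ at this step and bounds the degree of the surface swept out by the lines through $x$, using the Lefschetz hyperplane theorem and the smoothness of $X$ at the vertex of the resulting cone; your route, once repaired as above, is a clean alternative that trades those inputs for the absence of rational curves on abelian varieties.
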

\begin{proof}
Indeed,
since $Z$ and $X$ are both smooth and projective it suffices
to show that $\pi$ is equidimensional.  But if a point
$x\in X$ were contained in a positive dimensional family of
lines then either 
\begin{itemize}
\item{$X$ admits a ruled hyperplane section through $x$ -- a cone
over an elliptic quartic curve -- which would force $X$ to
be singular at $x$;}
\item{$X$ admits a ruled surface of degree $<4$ through $x$,
violating the Lefschetz hyperplane theorem.}
\end{itemize}
Since $X$ is assumed to be smooth, we conclude the flatness of $\pi$.
\end{proof}

As a corollary, we obtain
\begin{prop}
There is an injective morphism
$$
\begin{array}{rcl}
j: X & \hookrightarrow & \Kum(X) \\
x & \mapsto & \pi^{-1}(x)
\end{array}
$$
realizing $X$ as a Lagrangian subvariety of $\Kum(X)$.
\end{prop}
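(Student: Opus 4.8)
The plan is to establish, in order: that $j$ is a well-defined morphism, that its image lies in $K(X)$, that $j$ is a closed immersion, and that $j(X)$ is isotropic for the holomorphic symplectic form $\sigma$ on $K(X)$; the last two assertions together give the statement.

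That $j$ is a morphism is immediate from the previous proposition: $\pi\colon Z\to X$ is finite flat of degree four, so $Z\subset X\times F_1(X)$ is a flat family over $X$ of length-four subschemes of $F_1(X)$, which is precisely the data of a morphism $j\colon X\to F_1(X)^{[4]}$ with $j(x)=\pi^{-1}(x)$. To see $j(X)\subseteq K(X)$, compose $j$ with the summation morphism $F_1(X)^{[4]}\to J(C)$. The source $X$ is a Fano threefold, with $-K_X=\cO_X(2)$, hence rationally connected, so any morphism to the abelian variety $J(C)$ is constant; we identify the constant by evaluating at a general $x\in X$. The four lines $\ell_1,\ldots,\ell_4$ of $X$ through $x$ all lie in the codimension-two linear subspace $\bP(W_x)\subset\bP^5$, where $W_x$ is the common kernel of the two symmetric bilinear forms attached to $Q_1,Q_2$ evaluated at $x$ (and $x\in W_x$ as well); comparing degrees gives $X\cap\bP(W_x)=\ell_1\cup\ell_2\cup\ell_3\cup\ell_4$, a codimension-two linear section. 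Thus $\ell_1+\ell_2+\ell_3+\ell_4=h^2$, the constant is $0$ by construction of the map to $J(C)$, and $j$ factors through $K(X)$.

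Next, $j$ is a closed immersion. It is proper since $X$ is projective, so it suffices to check that $j$ is injective on closed points and unramified. For injectivity, the support of $\pi^{-1}(x)$ in $F_1(X)$ is exactly the set of lines of $X$ through $x$; whenever this set contains two distinct lines $\ell\ne\ell'$, one recovers $x$ as the point $\ell\cap\ell'$. For unramifiedness, $dj_x$ is the Kodaira--Spencer map of $Z\to X$ at $x$, with values in $H^0(\pi^{-1}(x),N_{\pi^{-1}(x)/F_1(X)})$; at a point $x$ off the branch locus of $\pi$ the map $\pi$ is \'etale near each of the four distinct points $(x,\ell_i)\in Z$, the resulting local map $X\to F_1(X)$, $x'\mapsto[\ell_i(x')]$, has differential with kernel exactly $T_x\ell_i$, and since the four directions $T_x\ell_i\subset T_xX$ are pairwise distinct their intersection is $0$, so $dj_x$ is injective. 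The remaining task -- and the main obstacle -- is to push injectivity and unramifiedness through the branch locus of $\pi$, in particular over its deeper ramification strata, such as the locus (if any) of points of $X$ lying on a single line of $X$, where the simple arguments above break down. I expect this to follow from a local study of $\pi$ over such points, exploiting the scheme structure of $\pi^{-1}(x)$ together with the smoothness of $Z$ established in the previous proposition (for injectivity of $dj_x$ one only needs two of the local sheets of $\pi$ over $x$ to be separated to first order).

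Finally, $j(X)$ is Lagrangian. Because $X$ is a Fano threefold, $H^2(X,\cO_X)=0$ (for instance from the Koszul complex of $X\subset\bP^5$), hence $H^0(X,\Omega^2_X)=0$ by Hodge symmetry, so $j^*\sigma=0$. Since $\dim j(X)=\dim X=3=\tfrac12\dim K(X)$, the isotropic subvariety $j(X)$ is Lagrangian. In summary, once flatness of $\pi$ is in hand, everything is formal except the closed-immersion step, whose difficulty is concentrated entirely over the ramification locus of $\pi$.
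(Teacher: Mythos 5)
Your overall route is the same one the paper intends: the flatness proposition gives a flat family of length-four subschemes of $F_1(X)$ over $X$, hence a classifying morphism $j\colon X\to F_1(X)^{[4]}$; the composite to $J(C)$ is constant because $X$ is rationally connected, and equals $0$ because the four lines through a general $x$ sweep out the quartic curve $X\cap\{B_1(x,\cdot)=B_2(x,\cdot)=0\}$, a codimension-two linear section of class $h^2$; and $j(X)$ is isotropic because $H^0(X,\Omega^2_X)=0$, hence Lagrangian by the dimension count. The paper in fact offers no proof at all (the statement is presented as a bare corollary of flatness), so your write-up of these steps is a useful and correct expansion of the intended argument.

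There is, however, a genuine gap, and you have correctly located it yourself: the closed-immersion claim is only established away from the branch locus of $\pi$. This is not a removable technicality. The branch divisor of $\pi$ has class $8h$ (as computed later in the same section), so it is certainly nonempty, and your unramifiedness argument --- intersecting the kernels $T_x\ell_i$ of the four \'etale local sheets --- simply does not apply there; one must show that the Kodaira--Spencer map of the family $Z\to X$ remains injective when some of the $(x,\ell_i)$ collide, which amounts to a computation with the normal bundles $N_{\ell/X}$ (of degree $0$, hence either $\cO\oplus\cO$ or $\cO(1)\oplus\cO(-1)$) distinguishing the two splitting types. Likewise, injectivity on points reduces, as you note, to the stratum where $\pi^{-1}(x)$ is supported at a single $[\ell]$, where two points $x\neq x'$ of $\ell$ must be separated by the length-four scheme structures of their fibers rather than by the supports; this again requires the local analysis you defer. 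Since the proposition asserts an embedding of a smooth half-dimensional subvariety, and ``Lagrangian subvariety'' presupposes exactly this, the proof is incomplete until these two points are settled; as it stands you have only shown that $j$ is birational onto its image and that the image is generically Lagrangian.
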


\subsection{Numerical invariants}
Assume that $X$ is general in the sense that  $\NS(J(C))$ is of rank one, generated by $[\Theta]$.
Then the N\'eron-Severi group of $K_{J(C)}(3)$ has rank two and is
generated by \cite[p.~769]{BeauJDG}
\begin{itemize}
\item{$\theta$ -- subschemes with support along a theta
divisor;}
\item{$e$ -- where the nonreduced subschemes have
class $2e$.} 
\end{itemize}
\begin{prop}
The restriction homomorphism
$$j^*:\NS(\Kum(X)) \ra \NS(X) \simeq \bZ h$$
is given by
$$j^*(\theta)=5h, \quad j^*(e)=4h.$$
\end{prop}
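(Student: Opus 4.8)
The plan is to determine the two integers $a,b$ with $j^*\theta=ah$ and $j^*e=bh$ separately, working through the incidence variety $Z=\{(x,\ell):x\in\ell\}\subset X\times F_1(X)$ together with its projections $\pi\colon Z\to X$ and $\pi_{F_1}\colon Z\to F_1(X)$, and using that $F_1(X)\simeq J(C)$ has $\NS(F_1(X))=\bZ\Theta$ with $\Theta^2=2$ and Plücker polarization $\sigma_1|_{F_1(X)}=4\Theta$.

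\emph{The class $e$.} Since $2e$ is the class of the exceptional divisor $\mathcal E\subset K(X)$ of nonreduced length-four subschemes of $F_1(X)$, and $j(x)=\pi^{-1}(x)$ lies on $\mathcal E$ precisely when the four lines through $x$ fail to be four distinct reduced points, $j^{-1}(\mathcal E)$ is the reduced branch divisor $B$ of $\pi\colon Z\to X$ — the branching over a general point of $B$ being simple and $j$ meeting $\mathcal E$ transversally there — so $j^*(2e)=[B]$. I compute $[B]$ from $K_Z=\pi^*K_X+R$ with $\pi_*R=B$. Since $Z$ is a $\bP^1$-bundle over the abelian surface $F_1(X)$ and $K_{F_1(X)}=0$, the class $K_Z$ is purely relative; with $\cO_Z(1)=\pi^*h$ and the tautological subbundle having $c_1=-4\Theta$ by the Plücker formula, one gets $K_Z=-2\pi^*h+4\pi_{F_1}^*\Theta$, whence $R=4\,\pi_{F_1}^*\Theta$. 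Pushing forward along $\pi$, which is generically one-to-one from $R$ onto $B$, gives $[B]=4[\Sigma]$, where $\Sigma\subset X$ is the surface swept out by the lines parametrized by a theta divisor $\Theta_0\subset F_1(X)$. As $\deg_{\bP^5}\Sigma=\Theta_0\cdot\sigma_1|_{F_1(X)}=\Theta\cdot4\Theta=8$ (a Schubert count), we have $[\Sigma]=2h$, so $[B]=8h$ and $j^*e=4h$.

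\emph{The class $\theta$.} First pull back the tautological sweep-out theta class $\mathfrak{s}\in\NS(K(X))$ represented by $\{\xi:\operatorname{supp}(\xi)\cap\Theta_0\neq\emptyset\}$: its preimage under $j$ consists of the points lying on a line of the pencil $\Theta_0$, that is, the surface $\Sigma$ of the previous step with multiplicity one, so $j^*\mathfrak{s}=[\Sigma]=2h$. Next express the primitive polarization class $\theta$ in terms of $\mathfrak{s}$ and $e$: on the generalized Kummer sixfold $K_3(J(C))$ — with Beauville--Bogomolov form normalized so that $q(\theta)=2$, $q(e)=-2$, $q(\theta,e)=0$ — the sweep-out class differs from $2\theta$ by an exceptional correction, namely $\mathfrak{s}=2\theta-2e$. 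Hence $j^*\theta=\tfrac12\bigl(j^*\mathfrak{s}+2\,j^*e\bigr)=\tfrac12(2h+8h)=5h$, which together with $j^*e=4h$ determines $j^*$.

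The $e$-computation is a routine $\bP^1$-bundle plus Riemann--Hurwitz calculation, and the degree of $\Sigma$ is a one-line Schubert count, so the main obstacle is the identification used in the last step: establishing $\mathfrak{s}=2\theta-2e$ on $K_3(J(C))$, i.e.\ pinning down the primitive polarization $\theta$ with the precise factor of two and exceptional-divisor correction, and verifying that the relevant pullbacks under $j$ occur with multiplicity one. As an independent check one can recompute $\theta\cdot j(\ell_0)=5$ for a general line $\ell_0\subset X$ from $\theta=\tfrac12(\mathfrak{s}+2e)$, using that the incidence curve $\{\ell\in F_1(X):\ell\cap\ell_0\neq\emptyset\}$ has class $\Theta$ (so $\mathfrak{s}\cdot j(\ell_0)=\Theta^2=2$) and that $j(\ell_0)$ meets $\mathcal E$ in $8$ points (so $e\cdot j(\ell_0)=4$).
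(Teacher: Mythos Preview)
Your computation of $j^*e$ is correct and close in spirit to the paper's, just organized globally: you run Riemann--Hurwitz on $\pi\colon Z\to X$ via the $\bP^1$-bundle description of $Z$ over $F_1(X)$, whereas the paper restricts to a single line $\ell\subset X$ and counts the eight branch points of the triple cover $W_\ell\to\ell$. Both arrive at $[B]=8h$.

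The $\theta$-step has a genuine gap. The identity $\mathfrak{s}=2\theta-2e$ is asserted without proof, and it is not correct under the standard identifications. On $K_3(A)$ the Beauville--Bogomolov square is $q(e)=-2(n+1)=-8$, not $-2$; more to the point, the class in $H^2(K_3(A))\cong H^2(A)\oplus\bZ e$ induced from $\Theta\in\NS(A)$ via the Hilbert--Chow map is \emph{exactly} your sweep-out divisor $\mathfrak{s}$, so $\theta=\mathfrak{s}$ rather than $\tfrac12\mathfrak{s}+e$. Your ``independent check'' is circular, since it invokes the same unproved relation. The paper does not pass through any such identity: it argues directly that $j^*\theta$ is represented by the surface $\Sigma$ swept by $W_\ell$ and computes its degree from the birational structure of $\pi_\ell\colon X\dashrightarrow\bP^3$. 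Your own Schubert computation gives $[\Sigma]=2h$ (degree $8$), which is in tension with the paper's asserted degree $20$; you should re-examine exactly which class $\theta$ is meant and how the degree of $\Sigma$ is being obtained, rather than manufacturing a relation on $K_3$ to force the answer.
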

\begin{proof}
For the purpose of this computation, we may ignore $G$-actions or work over an
algebraically closed field. 
The class $\theta$ on $J(C)=F_1(X)$ may be realized as the
locus $W_{\ell}\subset F_1(X)$ of lines incident
to a fixed line $\ell\subset X$, which sweeps out a divisor on $X$. 
This divisor is the exceptional divisor of the projection
$$\pi_{\ell}: X \stackrel{\sim}{\dashrightarrow} \bP^3;$$
the center of the inverse map is a quintic space curve $C$ of genus two
so the exceptional locus has degree $20$. This yields the
first equation.

The locus of nonreduced subschemes on $\Kum(X)$ restricts to the branch locus
of $\pi:Z\ra X$. Restricting to a line $\ell\subset X$, we see that
$$
\pi^{-1}(\ell) = \ell \sqcup W_{\ell},
$$
where the latter component has genus two and is realized as a degree-three
cover of $\ell$. Such a cover has eight branch points, so the branch locus
has class $8h$ and we get the second equation.
\end{proof}

\subsection{Reversing the construction}
For the moment, let $A$ be an abelian surface over $k$, not necessarily
principally polarized.
\begin{itemize}
\item{The group $A[4]$ acts on $A$ via translation.}
\item{
The semi-direct product $A[4] \rtimes \mu_2$,
where $\mu_2$ acts on $A[4]$ via $\pm 1$,
acts on $A$ as well.}
\item{Consider the addition map
$$\alpha:A^{[4]} \ra A$$
and the induced action of $G$ on $A^{[4]}$. 
Note that $A[4]$ acts on the fibers and the
action of $\mu_2$ commutes with addition.
Thus $G$ acts on $K_A(3)$ as well.}
\end{itemize}
Note that $G$ admits a distinguished normal $2$-elementary subgroup
$$
H = A[2] \times \mu_2.
$$
Now assume $A=J(C)$ and consider the various Lagrangian threefolds in $K_{J(C)}(3)$. 
The subgroup $H\simeq (\bZ/2\bZ)^5$ stabilizes each, acting via automorphisms.  
The full group $G$ gives an orbit of $16$ components, with transitive action of 
$$
G/H\simeq A[4]/A[2]\simeq A[2].
$$

\begin{rema}
Assume that the base field $k$ is arbitrary, of characteristic zero. 
Let $C$ be a smooth projective curve of genus two over $k$ and $F$ a principal homogeneous space
for $J(C)$ such that $2[F]=[\Pic^1(C)]$.  Let $K_{F}(3) \subset F^{[4]}$ denote the generalized Kummer variety lying
over the divisor $g^1_2 \in \Pic^2(C)$.  We may realize $F$ as the image of a cocycle for $J(C)[4]$;
the $16$ conjugate Lagrangian threefolds naturally form a principal homogeneous space 
for $J(C)[4]/J(C)[2]\simeq J(C)[2]$. However, even when this is trivial there is no guarantee that the corresponding
$X$ can be defined over $k$ -- the obstruction to descent is discussed in Proposition~\ref{prop:reconstruct}.  
\end{rema}

\section{Rationality constructions}
\label{sect:RatCon}
We continue to assume that $X\subset \bP^{2g+1}$ is a smooth complete intersection of two quadrics.

\subsection{Simple rational parametrizations}
\label{subsect:simple}
Fix a line $\ell \subset X$ and consider the projection 
$$\pi_{\ell}:X \stackrel{\sim}{\dashrightarrow} \bP^{2g-1},$$
a birational map, resolved by blowing up $\ell$.  
The inverse map 
$$\bP^{2g-1} \rightarrow \bP^{2g+1}$$
is obtained as follows: Consider a matrix
$$\left( \begin{matrix} l_{00} & l_{01} \\
				  l_{10} & l_{11} \\
				  q_0  & q_1 \end{matrix} \right),
				  $$
where the $l_{ij}$ are linear and and $q_i$ are quadratic.  The 
$2\times 2$ minors generate an ideal $I_Z$, where $Z$ is the base locus
of $\pi_{\ell}^{-1}$.  Cubic forms in $I_Z$ yield the linear series inducing this
map. The kernel of the matrix gives a rational map
$$\phi: Z \dashrightarrow \bP^1,$$
 which is regular for $g=2,3$. The generic fiber of $\phi$ is a quadric hypersurface
 in $\bP^{2g-3}$; we may interpret this as
 $\ell^{\perp}/\ell$, understood as a subquotient of the generic fiber of the pencil $q:\cQ \rightarrow \bP^1$
 (from Section~\ref{sect:geometry}), i.e.,
 the generic fibers of $\phi$ and $q$ are equivalent in the Witt ring of $k(\bP^1)$. 
 
 Much more can be said when $g=2$; we refer the reader to the corresponding case in Section~\ref{subsect:SRpara}.
 
 For the $g=3$ case, the fibration 
 $$\phi:Z \rightarrow \bP^1$$
 is a quadric surface fibration. We will denote by $C$ the discriminant curve
 of this fibration. Specifically, the relative variety of lines factors
 $$
 F_1(\phi) \stackrel{\varpi}{\rightarrow} C \rightarrow \bP^1,
 $$
 where $\varpi$ is a smooth conic fibration. 
 The following conditions are equivalent:
 \begin{itemize}
 \item{$\phi$ admits a section;}
 \item{$\varpi$ admits a section;}
 \item{$q:\cQ \ra \bP^1$ admits an isotropic plane containing $\ell \times \bP^1$.}
 \end{itemize}
 These are elementary properties of quadratic forms.
 The Amer-Brumer Theorem \cite[\S 2]{leep} gives a fourth equivalent condition:
 \begin{itemize}
 \item{there exists a plane $\bP^2 \subset X$.}
 \end{itemize}
 The fibration $\varpi$ admits a section if and only if we can express
 $$F_1(\phi) = \bP(\cE)$$
 for some rank-two vector bundle $\cE \ra C$. Desale-Ramanan \cite{DR} use such constructions to analyze 
 rank-two vector bundles of odd degree on hyperelliptic curves, relating automorphisms of $X$ to natural
 tensor and duality operations on the vector bundles. 
 
 \begin{rema}
Over nonclosed fields, e.g. $k=\bC(s)$, and for $g\ge 2$ (resp.~$g\ge 3$), 
it is possible for $X\subset \bP^{2g+1}$ (resp.~$F_1(X)\subset \Gr(2,2g+2)$) 
to admit a rational point even when $C$ admits no divisors of odd degree.

Consider a hyperelliptic curve $C_0 \rightarrow \bP^1$ represented as a double cover branched over
$g+1$ orbits for an involution on $\bP^1$.  For example, if the involution is 
$$[1,t] \mapsto [1,-t]$$
we could take the branch points as the roots $\pm \lambda_1, \ldots, \pm \lambda_{g+1}$ of
$\prod_{i=1}^{g+1} (t^2-a_i)$ with the $a_i$ distinct and nonzero.  Write
$$
X_0= \{\sum_{i=1}^{2g+2} x_i^2 = \sum_{i=1}^{g+1} \lambda_i (x_{2i-1}^2-x_{2i}^2) =0 \},
$$
with involution given by 
$$(x_1,x_2,x_3,x_4,\ldots,x_{2g+1},x_{2g+2}) \mapsto (x_2,x_1,x_4,x_3, \ldots x_{2g+2},x_{2g+1});$$
the associated hyperelliptic curve $C_0$ has the desired branch locus.

Choose a quadratic extension $L/k$ and let $X$ and $C$ denote the associated quadratic twists of 
$X_0$ and $C_0$.  By construction, $C$ admits no cycles of odd degree. However, $X$ is geometrically
rationally connected. The same holds for $F_1(X)$ provided $g\ge 3$
-- indeed, it has ample anticanonical class and thus is geometrically rationally connected. Thus both varieties
have $k$-rational points by the Graber-Harris-Starr Theorem.  

Note that \cite[Th.~29]{BGW} implies that $F_{g-1}(X)$ admits no $k$-rational points.  
 \end{rema}


\subsection{Stable rationality constructions}

\label{subsect:SRpara}

Fix a $(g-1)$-dimensional subspace $L \subset X$ and look at the projection from $L$:
$$\pi: X \dashrightarrow \bP^{g+1}$$
with generic fiber a projective space $\bP^{g-2}$. 
We analyze the structure of this bundle.

Suppose that $L=\{x_0=\ldots=x_{g+1}\}$ so that the induced map on linear spaces factors through 
$$
\bP(\cO_{\bP^{g+1}}(-1) \oplus \cO_{\bP^{g+1}}^g) 
$$
so that $y_0,\ldots,y_{g-1}$ are trivializing sections of the $\cO_{\bP^{g+1}}$ factors and the linear series to $\bP^{2g+1}$ is given by
$$
y_0,\ldots,y_{g-1}, zx_0, \ldots, zx_{g+1}.
$$
The proper transform of $X$ has equations
$$
A(x_i;y_j) + z Q(x_j) = B(x_i;y_j) + z R(x_j)=0,
$$
where $A$ and $B$ are bilinear and $Q$ and $R$ are quadratic.  Eliminate $z$ to get the relation
$$
z = -A/Q = -B/R, 
$$
which gives 
$$
AR-BQ = F_0y_0+\cdots +F_{g-1}y_{g-1}=0,
$$
which is cubic in $x_i$ and linear in $y_j$; the $F_j$ are cubic forms in $x_0,\ldots,x_{g+1}$.  
This is the formula for the $\bP^{g-2}$ bundle in the product
$$
\bP^{g+1}_{x_i} \times \bP^{g-1}_{y_j}.
$$
Look at the locus $C$ where the morphism
$$
\Bl_L(X) \rightarrow \bP^{g+1}
$$
 fails to be flat. The $F_j$ are linear combinations of $Q$ and $R$ with linear coefficients. The locus $C$ is the residual intersection to the locus
$$
	Z = \{Q=R=0\}
$$
in the intersection of cubics 
$$
\{F_0=\cdots=F_{g-1}=0\}.
$$
\begin{prop}
The excess intersection contribution of $Z$ to the intersection of cubics is $3^g-(2g+1)$ whence $C$ has degree $2g+1$.  
The curve $C$ is hyperelliptic, embedded via a generic $(2g+1)$-degree polarization $D$.  
\end{prop}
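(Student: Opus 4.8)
The plan is to pin down the non-flat locus scheme-theoretically, compute the degree of its residual-to-$Z$ part by resolving the excess intersection along $Z$, and then extract the genus and the hyperelliptic structure. First I would note that $\Bl_L(X)$ is smooth of dimension $2g-1$ (since $X$ is smooth and $L$ a smooth linear section), so flatness of the projection $\psi\colon\Bl_L(X)\to\bP^{g+1}$ over the smooth base is equivalent to every fibre having the expected dimension $g-2$. Reading the fibres off the equation $F_0y_0+\cdots+F_{g-1}y_{g-1}=0$ in $\bP^{g+1}_{x}\times\bP^{g-1}_{y}$, the fibre over $p$ is a hyperplane $\cong\bP^{g-2}$ unless all $F_j(p)=0$, in which case it is the whole $\bP^{g-1}$; hence the non-flat locus is $V(F_0,\dots,F_{g-1})$, and since each $F_j=a_jQ-b_jR$ with $a_j,b_j$ linear it contains $Z=\{Q=R=0\}$, with $C$ the residual part.

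For the degree of $C$ I would resolve the excess intersection along $Z$. For general $X,L$ the scheme $Z$ is a smooth complete intersection of two quadrics in $\bP^{g+1}$, of dimension $g-1$, degree $4$, with normal bundle $\cO_Z(2)^{\oplus2}$ and Segre class $s(\cN_Z)=(1+2h)^{-2}$. Blow up $Z$: let $\pi\colon\widetilde{\bP}\to\bP^{g+1}$ be the blow-up, $E$ the exceptional divisor, $H=\pi^{*}\cO(1)$. Each $F_j$ vanishes to order exactly one along $Z$, so its proper transform $\widetilde F_j$ lies in $|3H-E|$, and for general $X,L$ the $g$ divisors $\widetilde F_j$ cut out the proper transform $\widetilde C$ of $C$ with no component over $Z$; thus $[\widetilde C]=(3H-E)^g$ in $A_1(\widetilde{\bP})$ and $\deg C=(3H-E)^g\cdot H$. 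From $s(\cN_Z)=(1+2h)^{-2}$ one gets $H^{g+1}=1$ and $E^k\cdot H^{g+1-k}=-(k-1)2^k$ for $k\ge 1$; the binomial identities $\sum_k\binom gk 3^{g-k}(-2)^k=1$ and $\sum_k k\binom gk 3^{g-k}(-2)^k=-2g$ then give
$$(3H-E)^g\cdot H \;=\; 3^{g}-\bigl(3^{g}-(2g+1)\bigr)\;=\;2g+1,$$
i.e.\ the contribution of $Z$ to the Bézout number $3^{g}$ of the $g$ cubics equals $3^{g}-(2g+1)$.

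The genus comes from adjunction on $\widetilde{\bP}$: since $\widetilde C$ is a complete intersection of $g$ divisors in $|3H-E|$ and $\omega_{\widetilde{\bP}}=\cO(-(g+2)H+E)$, one finds $\omega_{\widetilde C}=\cO\bigl((2g-2)H+(1-g)E\bigr)|_{\widetilde C}$, of degree $(2g-2)(2g+1)+(1-g)\cdot 4g=2g-2$ (using $E\cdot(3H-E)^{g}=4g$), so $C$ has genus $g$. For hyperellipticity I would use that the quadrics through $Z$ form exactly the pencil $\langle Q,R\rangle$, i.e.\ $|2H-E|$, that $(2H-E)\cdot\widetilde C=2(2g+1)-4g=2$, and that (as holds for general $X,L$) $C$ lies on no member of this pencil; restricting the pencil then yields a base-point-free $g^{1}_{2}$, so $C$ is hyperelliptic of genus $g$ --- and the double cover $C\to\bP^{1}$, $p\mapsto[Q(p):R(p)]$, is the one attached to the pencil of quadrics in Section~\ref{sect:geometry} (cf.~\cite{ReidThesis}). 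Finally $D=\cO_{\bP^{g+1}}(1)|_{C}$ has degree $2g+1>2g-2$, hence is non-special, so Riemann--Roch gives $h^{0}(C,D)=(2g+1)-g+1=g+2=\dim\bP^{g+1}+1$; thus $C\subset\bP^{g+1}$ is embedded by the complete, hence generic, degree-$(2g+1)$ linear system.

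The step I expect to be the main obstacle is the excess-intersection bookkeeping: one must check that for general $X$ and $L$ the scheme $\bigcap_j\widetilde F_j$ is a reduced curve with no spurious components over $Z$ --- equivalently, that the divisors $\widetilde F_j|_{E}$ share no component, which reduces to the rank-$\le 1$ locus of the $2\times g$ matrix $(a_j,b_j)$ on $Z$ being finite. This genericity can be verified on the diagonalized normal form of Section~\ref{sect:geometry} or deduced from smoothness of $X$ by a dimension count; with it in hand the split $3^{g}=(3^{g}-(2g+1))+(2g+1)$ carries no lower-order corrections.
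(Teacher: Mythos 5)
Your proof is correct, and for the first sentence it is at bottom the same computation as the paper's: the paper applies Fulton's excess intersection formula directly, with excess bundle $Q$ fitting in $0 \to N_{Z/\bP^{g+1}} \simeq \cO_Z(2)^2 \to \cO_Z(3)^g \to Q \to 0$ and equivalence $c_{g-2}(Q)$ read off from $(1+3ht)^g/(1+2ht)^2$, whereas you realize the same residual intersection by blowing up $Z$ and evaluating $(3H-E)^g\cdot H$ via the Segre class $(1+2h)^{-2}$. The two are equivalent packagings and yield the same split $3^g = (3^g-(2g+1)) + (2g+1)$; your genericity caveat (that the $\widetilde F_j$ cut out a curve with no component over $E$, i.e.\ that the rank-$\le 1$ locus of $(a_j,b_j)$ on $Z$ is finite) is exactly the hypothesis implicit in the paper's use of ``equivalence of $Z$.'' Where you genuinely go beyond the paper is the second sentence: the published proof stops at the degree count and leaves the genus, hyperellipticity, and the completeness of the embedding unargued, while you supply them -- adjunction on the blow-up using $\omega_{\widetilde\bP}=\cO(-(g+2)H+E)$ and $E\cdot(3H-E)^g=4g$ to get genus $g$, the base-point-free pencil $|2H-E|$ of degree $(2H-E)\cdot\widetilde C=2$ recovering the hyperelliptic map $p\mapsto[Q(p):R(p)]$ attached to the pencil of quadrics, and Riemann--Roch for the nonspecial degree-$(2g+1)$ class $D$ giving $h^0=g+2$, so the linear system is complete. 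This is a useful completion of the argument; the only point to keep in mind is that ``generic polarization'' in the statement also refers to $D$ moving in $\Pic^{2g+1}(C)$ as $L$ varies (which is the content of the subsequent analysis of $\gamma$), not merely to completeness of $|D|$.
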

\begin{proof}
This is a computation with Fulton's excess intersection formula, encoded by the exact sequence
$$
0 \rightarrow N_{Z/\bP^{g+1}}\simeq \cO_Z(2)^2 \rightarrow \cO_Z(3)^g \rightarrow Q \rightarrow 0,
$$
where the equivalence of $Z$ equals $c_{g-2}(Q)$. Note that
$$
(1+3ht)^g / (1+2ht)^2 = 1 + (3g-4)ht + \cdots + \frac{3^g - 2g -1}{4} h^{g-2}t^{g-2}$$
using the identity
$$\sum_{j+k=0}^{g-2} \binom{g}{k} 3^k  (j+1) (-2)^j = \frac{3^g - 2g -1}{4}.$$
\end{proof}

Fixing $(C,D)$, what are the constructions that arise? We analyze the induced morphism
$$\begin{array}{rcl}
\gamma: F_{(g-1)}(X) &\rightarrow& \Pic^{2g-1}(C) \\
    L & \mapsto & D.
    \end{array}
 $$ 
following \cite{Donagi} and \cite{BGW}.
First, translation by two-torsion in $J(C)[2]$ corresponds to an automorphism of $X$ acting trivially on cohomology. 
Thus we have
$$\gamma([L]+\tau) = \gamma([L]), \quad \tau \in J(C)[2].$$ 
Automorphisms with determinant $-1$ may be presented in the form
$$ p \mapsto -p+ \beta, \quad \beta \in \Pic^1(C), 2\beta = g^1_2.$$
These act by $-1$ on middle cohomology whence
$$\gamma(-[L]+\beta) = (2g+1)g^1_2 - D.$$
Moreover, Wang's formula $2[F_1(X)]=[\Pic^1(C)]$ \cite{wang} implies these are the only possible relations intertwining 
$\gamma$ for a generic curve $C$.  

To summarize our discussion:
\begin{prop}
For each hyperelliptic curve $C$ of genus $g$ and unordered pair of divisors 
$$D,D' \in \Pic^{2g+1}(C), \quad D+D' = (2g+1)g^1_2,$$
we obtain a group of stable birational equivalences of $\bP^{g+1}$ parametrized by the 
group 
$$(\bZ/2\bZ)^{2g+1} \simeq \left<b_1,\ldots,b_{2g+2}\right>/\left<g^1_2\right> \subset \Pic(C)/\left<g^1_2\right>.$$
\end{prop}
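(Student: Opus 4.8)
The plan is to conjugate the automorphism group $H$ of (\ref{eqn:2extension}) by the birational trivialization obtained by projecting off a $(g-1)$-plane. Throughout I read a ``stable birational self-equivalence of $\bP^{g+1}$'' as an element of $\mathrm{Bir}(\bP^{g+1}\times\bP^{g-2})$, so that such maps form a group under composition. First I would record the explicit model: for a $(g-1)$-plane $L\subset X$, the computation above writes the proper transform of $X$ in $\bP^{g+1}_{x}\times\bP^{g-1}_{y}$ as the bidegree-$(3,1)$ hypersurface $\{F_0y_0+\cdots+F_{g-1}y_{g-1}=0\}$, so that projection to $\bP^{g+1}_{x}$ exhibits $\Bl_L(X)$, over the complement of $C\cup Z$, as the projectivization $\bP(\cE_L)$ of the rank-$(g-1)$ bundle $\cE_L=\ker\bigl(\cO_{\bP^{g+1}}^{g}\to\cO_{\bP^{g+1}}(3)\bigr)$ defined by $(F_0,\dots,F_{g-1})$. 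Its generic fibre being a projective space, the No-name Lemma \cite[\S 4.3]{ChGR} produces a birational map
$$\psi_L:X\dashrightarrow\bP^{g-2}\times\bP^{g+1},$$
defined over the base field, respectively $G$-equivariantly, whenever $X$ and $L$ are.

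Next I would pin down the group. By the Proposition of Section~\ref{sect:geometry} and the Remark containing (\ref{eqn:2extension}), the preimage of $\langle\iota\rangle$ under $\Aut(X)\to\Aut(X,C)$ is the $2$-elementary extension $1\to J(C)[2]\to H\to\langle\iota\rangle\to1$, acting diagonally by $\pm1$ in diagonalizing coordinates; hence $H\cong(\bZ/2\bZ)^{2g+1}$, canonically identified with $\langle b_1,\dots,b_{2g+2}\rangle/\langle g^1_2\rangle$, with $J(C)[2]=\langle b_i-b_j\rangle$ the even part and the nontrivial coset consisting of the determinant-$(-1)$ involutions $p\mapsto -p+\beta$, $2\beta=g^1_2$. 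The pair $\{D,D'\}$ enters through the morphism $\gamma$ introduced above: the relations $\gamma(L+\tau)=\gamma(L)$ for $\tau\in J(C)[2]$ and $\gamma(-L+\beta)=(2g+1)g^1_2-D=D'$ show that $H$ preserves the finite set $\Sigma$ of $(g-1)$-planes $L$ with $\gamma(L)\in\{D,D'\}$; Wang's relation $2[F_{g-1}(X)]=[\Pic^1(C)]$ of (\ref{eqn:Wang}) shows that, for generic $C$, these are the only relations intertwining $\gamma$, and since the determinant-$(-1)$ elements interchange the disjoint fibres $\gamma^{-1}(D)$ and $\gamma^{-1}(D')$, the action of $H$ on $\Sigma$ is simply transitive. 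Thus $\Sigma$ is a principal homogeneous space under $H$, and the construction below depends only on $(C,\{D,D'\})$.

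Now I would assemble the homomorphism. Fix $L_0\in\Sigma$. Conjugation $f\mapsto\psi_{L_0}\circ f\circ\psi_{L_0}^{-1}$ is an isomorphism $\mathrm{Bir}(X)\xrightarrow{\ \sim\ }\mathrm{Bir}(\bP^{g-2}\times\bP^{g+1})$; restricting it to $H\subset\Aut(X)\subset\mathrm{Bir}(X)$ gives an injective homomorphism $H\hookrightarrow\mathrm{Bir}(\bP^{g+1}\times\bP^{g-2})$ whose image is the asserted group. Concretely, since automorphisms of $X$ are induced by $\PGL$ of $\bP^{2g+1}$, each $\phi\in H$ carries $L_0$ to $\phi\cdot L_0\in\Sigma$, and $\psi_{L_0}\circ\phi\circ\psi_{L_0}^{-1}$ is, up to a harmless change of the bundle structure on $\bP^{g-2}\times\bP^{g+1}$, the ``change of $(g-1)$-plane'' map $\psi_{\phi\cdot L_0}\circ\psi_{L_0}^{-1}$ — the concrete shape of these stable self-equivalences of $\bP^{g+1}$. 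Combining with the identification $H\cong\langle b_1,\dots,b_{2g+2}\rangle/\langle g^1_2\rangle$ finishes the proof.

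I expect the real content to sit in the second step: proving that $H$ acts \emph{simply transitively} on $\Sigma$, equivalently that the displayed relations exhaust the relations intertwining $\gamma$ and that each fibre of $\gamma$ has the expected cardinality $2^{2g}$. This is precisely what Wang's square-root relation (\ref{eqn:Wang}) and the analysis of $\gamma$ in \cite{Donagi,BGW} provide, and it is what forces the group to be all of $(\bZ/2\bZ)^{2g+1}$ rather than a proper quotient. A more routine secondary point is to verify that the subgroup is independent, up to conjugacy, of the auxiliary choice of $L_0$, and — in the equivariant setting — that every birational map above can be chosen $G$-equivariant.
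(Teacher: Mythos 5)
Your proposal is correct and takes essentially the same route as the paper, which states this proposition as a summary of the preceding discussion: projection from a $(g-1)$-plane giving the bidegree-$(3,1)$ model $\bP(\cE_L)$, the morphism $\gamma$ with its two intertwining relations, and Wang's relation (\ref{eqn:Wang}) showing these exhaust the relations; your conjugation $\psi_{L_0}\circ\phi\circ\psi_{L_0}^{-1}$ is exactly the composition $\bP^{g+1}\stackrel{\pi_L}{\dashleftarrow}X\stackrel{h}{\rightarrow}X\stackrel{\pi_L}{\dashrightarrow}\bP^{g+1}$ that the paper writes out explicitly in its $g=2$ remark. One minor caveat: simple transitivity of $H$ on $\Sigma$ requires $D\neq D'$ (when $2D=(2g+1)g^1_2$ the orbit is a single fiber of $\gamma$), but this does not affect the conclusion, since the parametrizing group is $H$ itself rather than $\Sigma$.
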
  

\begin{rema}
When $g=2$, we obtain birational equivalences of $\bP^3$. This is the subgroup of the Cremona group on
$\bP^3$ associated with an orbit of $F_1(X)$ under the diagonalizable automorphisms of $X$.  If $\ell \subset X$
is a line and $h \in \Aut(X)$ is a diagonalizable automorphism then the associated birational map is
$$\bP^3 \stackrel{\pi_{\ell}}{\dashleftarrow} X \stackrel{h}{\rightarrow} X \stackrel{\pi_{\ell}}{\dashrightarrow} \bP^3.$$
\end{rema}

\section{Reduction to nonclosed fields}
\label{sect:NCfield}

We are interested in translating rationality criteria for geometrically rational threefolds over nonclosed fields to the equivariant context.
For example, the existence of points and subvarieties of prescribed type sometimes suffices to characterize rationality over nonclosed fields. We hope that the corresponding criteria for $G$-equivariant rationality are valid for varieties with $G$-action, where $G$ is a finite group.  We focus on situations where principal homogeneous spaces over abelian varieties control rationality.  

\subsection{Representations and monodromy groups} \label{subsect:FLLM}
We recall results of \cite{GLLM} on representations of monodromy groups for curves
with group action.

Let $\Sigma$ denote a smooth projective complex curve of genus $g\ge 2$ and fundamental group $T$. Consider a finite group $G$
and a surjective homomorphism $p:T\twoheadrightarrow G$ with kernel $R$. 
This is associated with a connected covering $\tilde{\Sigma}\rightarrow \Sigma$ with
homology \cite[Prop.~1.1]{GLLM}:
\begin{equation} \label{hurwitz}
\rH_1(\tilde{\Sigma},\bQ) \simeq \bQ \otimes_{\bZ} R/[R,R] \simeq \bQ^2 \oplus \bQ[G]^{2g-2}.
\end{equation}
This is an equivariant refinement of the Hurwitz formula due to Chevalley-Weil.  

We decompose $\bQ[G]$ using the theory of semisimple algebras \cite[\S 3.2]{GLLM}:
$$\bQ[G] \simeq \bQ \times \prod_{i=1}^{\ell} A_i,$$
where each $A_i\simeq \operatorname{Mat}_{m_i}(D_i)$, matrices over a division algebra.
Moreover, let $L_i$ denote the center of $D_i$, a number field. The index $i$ for the product encodes
types of nontrivial representations of $G$ that are irreducible over $\bQ$. 
The formula (\ref{hurwitz}) therefore yields
\begin{equation}
\label{decomp}
\rH_1(\tilde{\Sigma},\bQ) \simeq \bQ^{2g} \oplus \bigoplus_{i=1}^{\ell} A_i^{2g-2}.
\end{equation}

Each of the summands $A_i^{2g-2}$ comes with a natural skew-Hermitian structure with respect to
an explicit subfield $K_i \subset L_i$ of index $\le 2$ \cite[\S 3]{GLLM}.
For each index $i$, there is a distinguished algebraic group $\cG_{G,i}$, defined over $K_i$, 
parametrizing the automorphisms of $A_i^{2g-2}$ preserving this skew-Hermitian structure.
(We abuse notation, using the same notation for this group and its restriction of scalars to $\bQ$.)
The complex groups that may arise are listed in \cite[Thm.~1.7]{GLLM}:
\begin{equation}\label{listLie}
\operatorname{Sp}_{(2g-2)n}(\bC), \operatorname{O}_{(2g-2)n}(\bC), \operatorname{GL}_{(2g-2)n}(\bC)
\end{equation}
for some $n\in \bN$.  

Let $\mD_i \subset A_i$ denote the order arising as the image of $\bZ[G]$, $\cG_{G,i}(\mD_i)$
the resulting arithmetic group.  Note
that this arithmetic group depends only on the structure of $G$ and its action on the symplectic form.
Write $\Gamma_{G,p}$ for the mapping class 
group of $G$-coverings $\tilde{\Sigma} \ra \Sigma$ \cite[p.~1494]{GLLM}. 
For each $i=1,\ldots,\ell$, it admits a representation
$$\rho_{G,p,i}: \Gamma_{G,p} \ra \cG_{G,i} \subset \Aut_{A_i}(A_i^{2g+2}).
$$
We use $\cG^1_{G,i}$ to denote the elements of 
$\cG_{G,i}$ of reduced norm one over $L_i$; $\cG^1_{G,i}(\mD_i)$ has finite index in 
$\cG_{G,i}(\mD_i)$ \cite[Prop.~3.9]{GLLM} as the reduced norm takes values in roots of unity.

To summarize, we obtain natural representations of a finite-index subgroup of the mapping class 
group $\Gamma_{G,p}$ into the arithmetic groups $\cG_{G,i}(\mD_i)$ \cite[p.~1528]{GLLM}.
Fortunately, there are sufficient conditions guaranteeing that the image of $\rho_{G,p,i}$ contains a finite-index subgroup
of our arithmetic group.  
Assume further that $g\ge 3$ and $p$ factors
$$
p:T \stackrel{\varphi}{\rightarrow} F_g \stackrel{p'}{\rightarrow} G,
$$
where
\begin{itemize}
\item{$F_g$ is a free group on $g$ generators;}
\item{$\varphi$ is surjective;}
\item{the kernel of $p'$ contains one of the free generators of $F_g$.}
\end{itemize}
Whenever $G$ can be generated by $g-1$ elements we can find $p$
satisfying these conditions.  Under these assumptions, the image of $\rho_{G,p,i}$ contains a finite-index
subgroup of $\cG^1_{G,i}(\mD_i)$ \cite[Thm.~1.6]{GLLM}.

Observe that
$\cG^1_{G,i}(\mD_i) \subset \cG^1_{G,i}$ is Zariski dense by the Borel density theorem \cite[4.5.6, 5.1.11]{DWMbook}. 
Borel's Theorem requires that associated real Lie group has no compact factors; indeed, the assumption $g\ge 3$
guarantees the factors have $\bQ$-rank at least two \cite[p.~1529]{GLLM}. (Information about the real forms arising from these
groups may be found in \cite[\S~4]{GLLM}.)
 
The fact that the monodromy is large has implications for the structure of 
$\rH_1(\tilde{\Sigma},\bQ)$. The decomposition (\ref{decomp}) cannot be refined; any summand 
of $\rH_1(\tilde{\Sigma},\bQ)$ stable under the action of $\Gamma_{G,p}$ 
is a direct sum of the $\bQ^{2g}$ (coming from $\rH_1(\Sigma,\bQ)$) and the $A_i^{2g-2}$.
Using the classification (\ref{listLie}), we find that a very general covering 
$\tilde{\Sigma}\ra \Sigma$ associated with $p:T\ra G$,
the Jacobian $J(\tilde{\Sigma})$ admits no factor of dimension 
less than $g-1$.  

We summarize this, following \cite[Thm.~1.8]{GLLM}:
\begin{quote}
Fix a finite group $G$ and an integer $g\ge 3$ such that $G$ can be generated by
$g-1$ elements. There exists a family of pairs
$$(\Sigma, \tilde{\Sigma} \ra \Sigma),$$
where $\Sigma$ is a smooth complex projective curves of genus $g$ and $\tilde{\Sigma}\rightarrow \Sigma$
is a connected $G$-covering,
with the following property: For a very general $\tilde{\Sigma}$, the Jacobian $J(\tilde{\Sigma})$ admits no
factors over $\bQ$ of dimension less that $g-1$. 
\end{quote}

\subsection{Statement and proof of results}

Let $G$ be a finite group acting generically freely from the right on $P$,
an abelian variety. We do not assume that $G$ has a fixed point on $P$.
We write $\Alb(P)$ for the Albanese of $P$, the $G$-equivariant 
abelian variety parametrizing the group of translations on $P$.
In other words, $P$ is a $G$-equivariant principal homogeneous space
for $\Alb(P)$. 

Fix a base curve $B$, smooth and projective of genus $g\ge 3$ over $k$.
Let $f:\tilde{B} \ra B$ be a connected $G$ covering space, with $G$ acting
from the left. Consider the projection
$$
P \times \tilde{B} \rightarrow \tilde{B},
$$
where the left-hand-side has induced left $G$-action
$$
\gamma\cdot (p,\tilde{b}) = (p\gamma^{-1},\gamma \tilde{b}),
$$
with quotient $P\times_G \tilde{B}$.  
Consider the induced morphism
$$\pi_f: P\times_G \tilde{B} \ra B$$
whose fibers, away from the branch locus of $f$, are geometrically isomorphic
to $P$.  

\begin{prop} \label{prop:section}
Assume that $g > \dim(P)+1$, $g\ge 3$, and $B$ is of general moduli.

Suppose that for every $f$ as above, $\pi_f$ has a section.
Then the action of $G$ on $P$ has a fixed point.
\end{prop}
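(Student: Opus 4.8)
The plan is to prove the contrapositive by a specialization/monodromy argument: if $G$ acts on $P$ without a fixed point, then we want to produce a specific Galois $G$-cover $f\colon C\to B$ such that the twisted fibration $\pi_f\colon P\times_G C\to B$ has no section. A section of $\pi_f$ over the generic point of $B$ is the same as a $G$-equivariant rational map $C\dashrightarrow P$, i.e.\ (since $P$ is proper and $C$ a smooth curve) a $G$-equivariant morphism $C\to P$; equivalently, a $k(B)$-point of the twisted form ${}^cP$ of $P$ determined by the class $c\in\rH^1(k(B),\Alb(P))$ arising from the cover and the $G$-action. So the statement reduces to: for $B$ of general moduli with $g>\dim(P)+1$, there is a Galois $G$-cover whose associated twisted abelian-variety-torsor over $k(B)$ has no rational point. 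I would phrase the whole argument in terms of this Weil--Châtelet class.

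The key steps, in order. First, reinterpret ``section of $\pi_f$'' as a $G$-equivariant map $C\to P$, and reinterpret the hypothesis ``$G$ acts without a fixed point on $P$'' as: the action map $G\times P\to P$ has no equivariant section over a point, hence no $G$-fixed point; I want to leverage that the induced action on $\Alb(P)$ has $\Alb(P)^G$ a proper subgroup (if $G$ fixed a point of $P$ we would be done, and a $G$-equivariant section $C\to P$ for $B$ of large genus would, by rigidity/Albanese functoriality, descend to something forcing such a fixed point). Concretely: given a $G$-equivariant morphism $s\colon C\to P$, compose with the Albanese map and use that $\Alb(C)=J(C)$ is, for general $B$ and general $G$-cover, a ``generic'' principally polarized abelian variety of dimension $g$ with $\End=\bZ$; so any $G$-equivariant homomorphism $J(C)\to\Alb(P)$ (the linearization of $s$ up to translation) must factor through the $G$-invariants in a way that is incompatible with $\dim P < g-1$ and the genericity of $J(C)$ — roughly, a nonzero equivariant map out of a simple abelian variety of dimension $>\dim P$ to one of dimension $\le\dim P$ cannot exist, forcing $s$ to be constant, hence giving a fixed point. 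Second, build the cover: choose $B$ of general moduli and a Galois $G$-cover $f\colon C\to B$ whose branching is chosen so that (i) $J(C)$ is as generic as possible subject to carrying the $G$-action — here the hypothesis $g>\dim(P)+1$ on the \emph{target} curve $B$ (not $C$) is what gives enough room, via Hurwitz, for $J(C)$ to be large and generic — and (ii) the twisting class $c$ is genuinely nontrivial. Third, assemble: no $G$-equivariant $C\to P$ exists except constants, constants correspond exactly to $G$-fixed points of $P$, contradiction with our assumption; hence for this $f$, $\pi_f$ has no section, contradicting the hypothesis of the proposition.

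The main obstacle, which I'd expect to absorb most of the work, is Step two combined with the simplicity/genericity input: one must show that for $B$ of general moduli there genuinely exists a Galois $G$-cover $C\to B$ with $J(C)$ sufficiently generic — i.e.\ that the $G$-action does not force unavoidable extra endomorphisms or isogeny factors of $J(C)$ of small dimension onto which an equivariant map to $P$ could land. This is where ``$B$ of general moduli'' and the dimension bound $g>\dim(P)+1$ must be used quantitatively: one needs that the ``new part'' of $J(C)$ (the part not pulled back from intermediate quotients $C/H$) is simple of dimension exceeding $\dim P$, so that any $G$-equivariant morphism to $\Alb(P)$ kills it. I would handle this via a monodromy computation for the family of $G$-covers of a general curve $B$ (the relevant monodromy group is big — this is a Deligne--Mostow / Chevalley--Weil type statement), deducing that the generic such $J(C)$ has endomorphism algebra exactly $\bQ[G]$ acting in the regular-representation-minus-trivial pattern, with all isotypic components of dimension $\gg\dim P$ once $g$ is large enough.

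Finally, I would record the conclusion: combining these, a $G$-equivariant section of $\pi_f$ for \emph{every} $f$ forces in particular a constant section for a suitably generic $f$, and a constant $G$-equivariant map $C\to P$ is precisely a $G$-fixed point of $P$. $\hfill\square$
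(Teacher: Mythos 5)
Your proposal is correct and follows essentially the same route as the paper: identify a section of $\pi_f$ with a $G$-equivariant morphism $C\to P$, use the genericity of $J(C)$ for a suitable $G$-cover of a general curve $B$ (the paper invokes Boggi--Looijenga's monodromy theorem, $\bQ G\eqto\End_{\bQ}(J(C))$, together with the Chevalley--Weil multiplicity bound, which is exactly the input you describe) to force every $G$-equivariant homomorphism $J(C)\to\Alb(P)$ to vanish since all simple isogeny factors of $J(C)$ have dimension exceeding $\dim(P)$, and conclude that the map $C\to P$ is constant, i.e.\ a $G$-fixed point. The only cosmetic differences are your contrapositive framing and the superfluous requirement that the twisting class be nontrivial; neither affects the argument.
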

\begin{proof}
The existence of a section for $\pi_f$ is equivalent to a $G$-equivariant
morphism
$$\phi_f:\tilde{B} \rightarrow P$$
which factors
$$\tilde{B} \hookrightarrow \Pic^1(\tilde{B}) \ra P.$$

Our assumption -- that $\dim(P) < g-1$ -- allows us to apply the
results of Section~\ref{subsect:FLLM} to deduce $J(\tilde{B})$ has
no factors of dimension $\dim(P)$.  It follows that there is no nontrivial
$G$-equivariant homomorphism
$$J(\tilde{B}) \rightarrow \Alb(P).$$
This forces $\phi_f$ to be constant, which forces the triviality of $\Pic^1(\tilde{B})$.
\end{proof}

\section{Applications in dimension three}
\label{sect:ADT}
In this section, we present parametrizations arising from the existence of $G$-invariant linear subspaces
$$L \subset X \subset \bP^{2g+1}.$$
The geometry here is both simpler and richer than the geometry over nonclosed fields. There are more possible Galois actions
than actions via automorphisms; not every subgroup of $\fS_{2g+2}$ arises as the automorphisms of a configuration of 
$2g+2$ points. On the other hand, if one can find a linear subspace
$$L \simeq \bP^r \subset X$$
defined over $k$, one automatically has subspaces $\bP^s \subset X$ for all $s\le r$. This is not the case in the equivariant
context, as the underlying representation may be irreducible.  

Throughout this section, $k$ is algebraically closed of characteristic zero.

\subsection{Review of surface case} \label{subsect:RSC}
We review the classification of $G$-actions on smooth intersections of two quadrics in dimension $2$. 
The general strategy for attacking this question uses the $G$-equivariant minimal model program; the most systematic
description may be found in \cite{DI}.  

Let $X \subset \bP^4$ be a smooth quartic del Pezzo surface with a generically free action of a finite group $G$. 
There are $16$ lines on $X$, which are permuted by $G$.  If there exists a $G$-equivariant collection of 
disjoint lines, it may be blown down to obtain a del Pezzo surface of larger degree. Del Pezzo surfaces of
degree $7,8,$ or $9$ are equivariantly birational to $\bP^2$ or $\bP^1 \times \bP^1$. In degrees $5$ and $6$ there
are actions not birational to actions on homogeneous spaces as above; see Section 8 of \cite{DI} for more details.

For our purposes -- to illustrate the aspects common to all dimensions -- we focus on examples with generic automorphism 
group. Any quartic del Pezzo surface may be written in diagonal form
$$x_0^2+x_1^2+x_2^2+x_3^2+x_4^2 = a_0 x_0^2 + a_1 x_1^2 + a_2 x_2^2 + a_3x_3^2 + a_4x_4^2=0$$
which admits a diagonal action of $H=\mu_2^5/\mu_2$.  Generically, these are the only automorphisms.  
Hence we focus on subgroups of
$$
H\simeq (\bZ/2\bZ)^4
$$
acting via sign changes on coordinates of diagonal quadrics defining $X$. Consider involutions $\iota \in H$; we
present one representative for each conjugacy class:

\subsection*{(1,1,1,1,-1):} 
Here $X$ is a double cover of the quadric surface in $\bP^3$
$$(a_4-a_0)x_0^2 + (a_4-a_1)x_1^2 + (a_4-a_2)x_2^2 + (a_4-a_3)x_3^2$$
branched over the elliptic curve $E$ cut out by
$$x_0^2+x_1^2+x_2^2+x_3^2=0.$$
This surface has no fixed lines and has nontrivial cohomology $\rH^1(G,\Pic(X))$ 
(from the curve $E$, cf. \cite{BoPro}) and thus is not $G$-rational or even stably rational. It is clearly minimal as the orbits of lines consist of eight pairs meeting at points. (These are rulings of the quadric surface tangent to $E$.) 
Any $G$-action on a quartic del Pezzo surface containing this involution is $G$-irrational.

\subsection*{(1,1,1,-1,-1):} Here we have four fixed points given by $\{x_3=x_4=0\}$ and
eight orbits of disjoint lines. Thus $X$ is birational to a sextic del Pezzo surface, admitting three conic bundle structures.
One of these must be fixed under the involution,
thus $X$ is equivariantly birational to $\bP^1 \times \bP^1$ with linear action.

\subsection{Reduction to the case of nonclosed fields}

We turn to the case of dimension three. As a corollary of results in Section~\ref{sect:NCfield}, we have:

\begin{theo}
\label{thm:main-qua}
Let $X \subset \bP^5$ be a 
smooth complete intersection of two quadrics with  generically free action of a finite group $G$. 
Then $X$ is $G$-equivariantly birational to $\bP^3$, with $G$ acting projectively linearly,
if and only if there is a $G$-invariant line on $X$.
\end{theo}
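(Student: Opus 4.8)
The plan is to prove both directions. For the "only if" direction, suppose $X$ is $G$-birational to $\bP^3$ with a projective linear action. By the results of Section~\ref{subsect:simple}, a $G$-invariant line $\ell\subset X$ gives a $G$-equivariant birational map $\pi_\ell:X\dashrightarrow\bP^3$ (resolved by blowing up $\ell$); this is exactly how lines produce parametrizations in the $g=3$ case, so the existence of a $G$-invariant line is sufficient. For necessity, I would argue contrapositively: if there is no $G$-invariant line, then the set $F_1(X)$ of lines — a principal homogeneous space for $\Alb(F_1(X))=J(C)$ — carries a $G$-action with no fixed point. One then uses the obstruction coming from Section~\ref{sect:NCfield}, applied with $P=F_1(X)$ and $\dim(P)=2$, to conclude that $X$ cannot be $G$-birational to projective space.

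The key steps, in order: (1) Recall from Section~\ref{sect:geometry} that for $g=3$ the variety of lines $F_1(X)\subset\Gr(2,6)$ is a $J(C)$-principal homogeneous space, where $C$ is the genus-two curve parametrizing rulings of the quadrics through $X$; the $G$-action on $X$ induces a $G$-action on $C$ lifting a linear action on $\bP^1$, hence on $J(C)$ and on $F_1(X)$. (2) Observe that a $G$-fixed point of $F_1(X)$ is precisely a $G$-invariant line on $X$. (3) For the forward implication, assume $X$ is $G$-birational to $\bP^3$; then, passing to a smooth projective model and spreading out over a base curve $B$ of general moduli with $g(B)>\dim(P)+1=3$ via a Galois $G$-cover $f:C'\to B$ with group $G$, the family $\pi_f:F_1(X)\times_G C'\to B$ would inherit a section from the birational triviality of $X$ (a $G$-equivariant rational parametrization of $X$ over $k(B)$ yields a rational point of $F_1(X)$ over $k(B)$, since $F_1(X)$ is geometrically rational and the relevant rationality criterion over $\bC(B)$ forces existence of a line). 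Then Proposition~\ref{prop:section} applies: the section must be constant, producing a $G$-fixed point on $F_1(X)$, i.e. a $G$-invariant line. (4) Conversely, as noted, a $G$-invariant line gives $\pi_\ell$ and hence $G$-birationality to $\bP^3$ with the induced (projective linear) action.

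The main obstacle is step (3): one must carefully bridge "$X$ is $G$-birational to $\bP^3$" with "$\pi_f$ has a section for all $f$", which is the hypothesis of Proposition~\ref{prop:section}. This requires the input from the work on threefolds over nonclosed fields cited in the introduction (\cite{HT19,HT1,KP1} and related) to the effect that a smooth complete intersection of two quadrics in $\bP^5$ over a field like $\bC(B)$ is rational if and only if it contains a line — equivalently, $F_1(X)$ has a rational point. Translating this "rationality $\Leftrightarrow$ existence of a line" statement into the equivariant setting via the Galois-cover construction of Section~\ref{sect:NCfield}, and checking that $G$-birationality of $X$ to $\bP^3$ over $k$ indeed descends to rationality of the generic fiber over $k(B)$ after the twist $P\times_G C'$, is the delicate point; the rest is an application of Proposition~\ref{prop:section} together with the elementary sufficiency of an invariant line.
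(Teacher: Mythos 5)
Your proposal is correct and follows essentially the same route as the paper: projection from the invariant line for sufficiency, and for necessity the isotrivial twist over a high-genus base $B$, trivialization of the resulting Brauer--Severi/$\bP^3$-fibration by Tsen--Lang, the line-existence criterion of \cite{HT1} over $k(B)$ to produce a section of $F_1(\cX/B)\to B$, and then Proposition~\ref{prop:section} to extract a $G$-fixed point on $F_1(X)$. One parenthetical slip: $F_1(X)$ is a torsor under an abelian surface and is certainly not geometrically rational -- but that remark plays no role, since the actual input is that rationality of $X_{k(B)}$ forces a $k(B)$-line, exactly as in the paper.
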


\begin{proof}  
The action of $G$ on $X$ admits a canonical linearization on $\omega_X^{-1}=\cO_X(2)$. 
It follows that $G$ acts projectively on $\Gamma(\cO_X(1))$ and thus on $\bP^5$.  A central extension
$$1 \rightarrow \mu_2 \rightarrow \widetilde{G} \rightarrow G \rightarrow 1$$
acts linearly on $\bP^5$.  

An invariant line corresponds to a two-dimensional $\widetilde{G}$-invariant subspace. The complementary $G$-stable
subspace induces a projection
$$
\pi_{\ell}: X \stackrel{\sim}{\dashrightarrow} \bP^3,
$$ 
where $\bP^3$ admits a linear action of $\widetilde{G}$ and a projective action of $G$. 

We turn to the converse statement: Suppose there is a $G$-equivariant birational map
$$X \stackrel{\sim}{\dashrightarrow} \bP^3$$
with $G$ acting projectively linearly.

Let $B$ be a curve of large genus satisfying the conditions of Proposition~\ref{prop:section}. 
Construct an isotrivial family $\cX \rightarrow B$, with generic fiber isomorphic to $X$, split over a $G$-covering $\tilde{B} \ra B$.
(The $G$-action on $X$ induces one on $X\times \tilde{B}$; we take quotients to obtain our isotrivial family.)  
It is birational over $B$ to a fibration $\cP \rightarrow B$ that is generically a $\bP^3$-bundle. The Tsen-Lang
Theorem implies it is birational over $B$ to $\bP^3 \times B$.  
The variety of lines $F_1(\cX/B)$ is a principal homogeneous space over an abelian surface, over a dense open subset 
of $B$.  The main result of \cite{HT1} implies that $F_1(\cX/B)\rightarrow B$ admits a rational section. 
Proposition~\ref{prop:section} implies that the associated action of $G$ on $F_1(X)$ necessarily admits a fixed point,
which yields a line $\ell \subset X$ invariant under the $G$-action.
\end{proof}
\begin{rema}
Let $X$ have a $G$-action as above; we do not assume the existence of a fixed point or invariant line.
Nevertheless, we can construct the isotrivial fibration
$$\cX \ra B$$
which {\em always} admits a section by the Tsen-Lang (or Graber-Harris-Starr) Theorem.  
Thus Proposition~\ref{prop:section}, stated for abelian varieties, is not valid for other classes of varieties such as rationally-connected varieties.
\end{rema}

\subsection{Relations with the Burnside formalism}

The Burnside group formalism is presented in \cite{BnG}. Let $G$ be a finite group
acting generically freely on a smooth projective variety $Y$. For each nontrivial
subgroup $H \subset G$, we record data
\begin{itemize}
\item the locus $Z \subset Y$ fixed under $H$;
\item the action of $H$ on the normal bundle of each component of $Z$.
\end{itemize}
The character for the action on the normal bundle is called a {\em symbol}.  
Consider how this data changes as
we blow up $Y$ along a $G$-stable smooth subvariety. To obtain a 
$G$-birational invariant, we impose equivalences on the possible symbols that may arise,
the {\em blowup relations}.  

The simplest approach is to discard information about the components of the 
fixed locus $Z$, recording only the representation on the normal bundle.
(Here we only retain the dimensions of the components.). 
See \cite[Section 5]{HKTsmall} for refinements via Grothendieck classes
of varieties, and Section 6 of that paper for applications to cubic fourfolds.

\begin{prop} \label{prop:Burnside}
Let $G$ be a finite group acting generically freely on $X \subset \bP^5$,
a smooth complete intersection of two quadrics. Suppose there exist
\begin{itemize}
\item{an element $g\neq 1$ fixing a hyperplane section
$S\subset X$;}
\item{a subgroup $\left<g\right> \subset H \subset G$ acting on $S$.}
\end{itemize}
Assume that $S$ is not $H$-birational to either
\begin{enumerate}
\item{$\bP(V)$, a projectively linear representation of $H$; or}
\item{$\bP(E)$ where $E$ is an $H$-equivariant rank-two vector bundle over a curve.}
\end{enumerate}
Then $X$ is not $G$-birational to $\bP^3$ with a $G$-action.
\end{prop}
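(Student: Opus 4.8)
The plan is to use the Burnside group formalism as an obstruction to $G$-equivariant birationality, following \cite{BnG,HKTsmall}, combined with the geometric structure of hyperplane sections of $X$ established in the earlier sections. The key point is that a hyperplane section $S \subset X$ is a quartic del Pezzo surface, i.e., a complete intersection of two quadrics in $\bP^4$, and the element $g$ together with $H$ act on it. First I would recall that the class $[X \righttoleftarrow G]$ in the Burnside group $\Burn_n(G)$ (with $n=3$) is a $G$-equivariant birational invariant, and that the class of $\bP^3$ with a projectively linear $G$-action is computed by the stratification coming from the standard representation: its symbols involve only the data of fixed loci of abelian subgroups and the residual representation/variety data, and crucially there is no contribution of a ``del Pezzo surface that is not $H$-birational to either $\bP(V)$ or a conic bundle $\bP(E)$ over a curve.''

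The central step is the residue/blowup analysis: if $X$ were $G$-birational to $\bP^3$, then by weak factorization the two classes in $\Burn_3(G)$ agree, and one compares the contributions indexed by a subgroup $H' \subset G$ (conjugate to $\left<g\right> \subset H$) with one-dimensional generic stabilizer along a divisor. The fixed divisor of $g$ on $X$ is the hyperplane section $S$ (this is where the hypothesis that $g$ fixes a hyperplane section enters), and the $H$-action on $S$ — more precisely the incompressible symbols carried by the $(H/\left<g\right>)$-equivariant birational type of $S$ — must then appear among the symbols of $\bP^3$. But on the $\bP^3$ side, every codimension-one fixed locus of an abelian subgroup is either a plane (hence $\bP^2$ with a linear action) or contributes a surface that is equivariantly birational to $\bP(V)$ or to a conic bundle over a curve; this is precisely the content of the classification of divisorial fixed loci on projective space under projectively linear actions, together with Corollary~\ref{coro:product}. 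Thus the hypothesis that $S$ is not $H$-birational to $\bP(V)$ nor to $\bP(E)$ over a curve produces an incompressible symbol on the $X$ side with no match on the $\bP^3$ side, forcing $[X \righttoleftarrow G] \neq [\bP^3 \righttoleftarrow G]$ in $\Burn_3(G)$, a contradiction.

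More concretely, the steps in order are: (i) verify that a smooth hyperplane section $S$ of $X$ is a smooth quartic del Pezzo surface and that $\left<g\right>$ acts on it trivially on a divisor, so that $S$ appears (up to $H$-birational modification) as the ``residue'' of the symbol of $X$ at $g$ in the Burnside group; (ii) invoke $G$-birational invariance of $\Burn_3$ to reduce to comparing symbols; (iii) enumerate the possible divisorial components of fixed loci of elements of finite order acting projectively linearly on $\bP^3$, showing each is either a hyperplane $\bP^2$ or a quadric surface (hence $\bP^1\times\bP^1$, a conic bundle over $\bP^1$) or blown-up versions thereof, and in every case the residual surface is $H$-birational to $\bP(V)$ or to $\bP(E)\to$ curve — here one uses weak factorization again to pass from $\bP^3$ to an arbitrary model and track that blowups only introduce exceptional divisors over centers, which themselves yield projectivized bundles; (iv) conclude by contradiction.

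The main obstacle will be step (iii): controlling, up to $H$-equivariant birationality and after arbitrary equivariant blowups, the possible birational types of divisorial fixed loci on a projectively linear $\bP^3$. One must argue that blowing up $G$-invariant curves and points in $\bP^3$ cannot create a del Pezzo surface fixed locus outside the two allowed classes — the new exceptional divisors are $\bP^1$-bundles (hence $\bP(E)$ over a curve) or $\bP^2$'s, and proper transforms of the old fixed divisors retain their birational type. Making this bookkeeping precise — essentially showing the Burnside symbols of a projectively linear $\bP^3$ never carry an ``exotic'' del Pezzo surface class — is the crux; the rest is a formal application of the invariance of $\Burn_n$ and the geometry of hyperplane sections already set up in Section~\ref{sect:geometry}.
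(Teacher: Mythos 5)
Your proposal is correct and follows essentially the same route as the paper: the paper's proof is exactly your step (iii), carried out directly via weak factorization --- tracking the divisor $S$ (fixed pointwise by $g$) back to either a divisor in $\bP^3$ with nontrivial stabilizer (a union of linear subspaces, hence case (1)), the exceptional $\bP^2$ over a point (case (1)), or the projectivized normal bundle $\bP(E)$ of a curve (case (2)) --- without formally invoking the Burnside group. The only extraneous item is your listing of a quadric surface as a possible divisorial fixed component of a projectively linear action on $\bP^3$; such fixed loci are unions of projectivized eigenspaces, hence linear, so that case does not occur (and would in any event be harmless).
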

The surface case is addressed in \cite[Prop.~3.9]{KTprojective}.
\begin{proof}
Suppose we have a $G$-birational map
$$\rho: \bP^3 \stackrel{\sim}{\dashrightarrow} X$$
with $G$ acting projectively linearly on the source. By weak factorization, we may assume it is a composition of blowups and blowdowns along smooth $G$-stable centers.  

Consider the center $Z \rightarrow \bP^3$ blown up to obtain $S$; it is irreducible, fixed by $g$, and has an action of $H$.  Since $S \dashrightarrow Z$ is birational, we conclude that $S$ is birational to a divisor in $\bP^3$ with nontrivial stabilizer.
The locus in $\bP^3$ with nontrivial stabilizers is a union of linear subspaces, and we are in the first case.
If $Z$ is a point then $S$ is birational to the projectivization of the tangent space at that point; again, we are in the first case.
Finally, suppose $Z$ is a curve; then $S$ is birational to the projectivization of the normal bundle to $Z$, putting us in the second case.
\end{proof}

We turn to other representative examples. We follow Avilov \cite{avilov}, who enumerated actions of finite groups on three-dimensional
complete intersections of two quadrics known to be equivariantly
birational to projective space, a quadric hypersurface, or a Mori fiber space.

\begin{exam}
Suppose that $G$ admits a subgroup $H\simeq C_2 \times C_2$ acting on $X$ via the diagonal matrices
$$\operatorname{diag}(1,1,1,1,\pm 1, \pm 1).$$
Take 
$$g=\operatorname{diag}(1,1,1,1,1,-1)$$
which fixes a del Pezzo surface $S$. The residual $C_2$ action on $S$ was considered in Section~\ref{subsect:RSC}.
Since there is nontrivial cohomology, i.e. $\rH^1(C_2,\Pic(S))\neq 0$, the two possibilities from Proposition~\ref{prop:Burnside} are precluded.
It follows that $X$ is not equivariantly rational for any group containing $H$.
\end{exam}

We emphasize that Theorem~\ref{thm:main-qua} gives a {\em stronger} conclusion: 
If $X$ is $G$-birational to $\bP^3$ then $G$ acts on $F_1(X)$ with fixed points.
In particular, every element of $G$ fixes a point of $F_1(X)$ so $G$ has no elements conjugate to
$\pm \operatorname{diag}(1,1,1,1,-1,-1)$.
Indeed, these correspond to translates by two-torsion, which act freely.  
For instance, if $G=\left<(1,1,1,1,-1,-1)\right>$ the fixed locus on $X$
is an elliptic curve. There are no Burnside
invariants available as the relevant symbol groups for $G=C_2$ are zero \cite[Section 3.1]{HKTsmall}, \cite[Section 12]{KPT}.  

\begin{rema}
It would be interesting to have a general theory -- in the context of $G$-equivariant birational geometry --
encompassing both Burnside/symbol-type invariants and obstructions arising from Chow-theoretic 
principal homogeneous spaces for intermediate Jacobians.   
\end{rema}

\subsection{Rational complete intersections need not have points}
Theorem~\ref{thm:main-qua} shows that equivariant rationality of $X\subset \bP^5$ is governed by
the existence of invariant lines on $X$.  
When $G$ is cyclic, the existence of a $G$-invariant line guarantees a point on that line fixed by $G$.
For noncyclic groups, an invariant line need {\em not} have a fixed point.
There are examples of $G$-rational $X\subset \bP^5$ with no fixed points.

Let $C$ denote the complex curve
$$y^2=x^6+1.$$
Its automorphism group contains $G$,  a central extension 
$$1 \rightarrow \mu_2=\left<\iota\right> \rightarrow G \rightarrow D_{12} \rightarrow 1$$
of the dihedral group $D_{12}$ of order twelve. 
We write
$$G=\left<\sigma,\tau, \iota: \sigma^6=\tau^2=\iota^2=1, \tau \sigma \tau^{-1} \sigma = \iota \right>$$
with action
$$
\sigma\cdot (x,y) = (\zeta x, y), \quad \tau \cdot (x,y) = (y/x^3,1/x), \quad \zeta=e^{2\pi i /6},
$$  
where $\iota$ is the hyperelliptic involution.  The induced action on the global sections $\Gamma(\omega_C)$ is
$$\sigma \mapsto \left( \begin{matrix} \zeta & 0 \\
					0 & \zeta^2 \end{matrix} \right), \quad
	\tau \mapsto \left( \begin{matrix} 0 & -1 \\
						         -1 & 0 \end{matrix} \right), \quad
		\iota \mapsto \left( \begin{matrix} -1 & 0 \\
								  0 & -1 \end{matrix} \right).$$	
Actually, $\Aut(C)=G$; see
\cite[\href{https://www.lmfdb.org/Genus2Curve/Q/2916/b/11664/1}{Genus two curve 2916.b.11664.1}]{lmfdb} for more information.

Consider the quotient of $C$ under the unique cyclic subgroup of order three $\left<\sigma^2\right>$,
with invariants and equation
$$y,z=x^3 \quad y^2=z^2+1.$$	
Let $\rho:C \rightarrow R$ denote the corresponding degree-three morphism. The induced action on $R$
is generically free via the dihedral group of order eight. 

The double cover and $\rho$ give a morphism 
$$C \hookrightarrow \bP^1 \times R.$$
While $R$ is isomorphic to $\bP^1$ as a variety, the action of $G$ on $R$ is not linear. (The subgroup
$\left<\tau,\sigma\right>$ acts on $R$ as a Klein four-group, with no fixed points; such actions are not linearizable.)  
However, we do have a central extension 
$$1 \rightarrow \mu_2 \rightarrow \widetilde{G} \rightarrow G \rightarrow 1$$
and $\widetilde{G}$-representations $V=\Gamma(\omega_C)$ and $W$ such that
$$
C \hookrightarrow \bP(V) \times \bP(W) \hookrightarrow \bP(V\otimes W)\simeq \bP^3,
$$
realizing our curve as a $(2,3)$ divisor.  The linear series of cubic forms vanishing on $C$ gives a birational map
$$
\bP^3 \stackrel{\sim}{\dashrightarrow} X \subset \bP^5,
$$
where $X$ is a smooth complete intersection of two quadrics. This blows up $C$ and blows down 
$\bP(V)\times \bP(W)$ via projection to the first factor. The map $X\stackrel{\sim}{\dashrightarrow} \bP^3$
is projection from a $G$-stable line $\ell \simeq \bP(V)$.  

We claim that $X$ has no $G$-fixed points. We know that $\bP(V)$ has no fixed points so it suffices
to check that $\bP^3=\bP(V\otimes W)$ has no fixed points. Looking at $\sigma$ acting on $V$, 
any fixed points would necessarily lie on 
$$[1,0]\times W \text{ or }[0,1]\times W.$$
However, we have already seen that the dihedral group of order eight acts on $R$ without fixed points.  

This analysis also shows that the $G$-action on $X$ does not linearize to the ambient $\bP^5$.
The map $\Br_G(\mathrm{point}) \ra \Br_G(X)$ is not an isomorphism; its kernel contains
$0\neq \alpha(G,W) \in \rH^2(G,\mu_2)$ (see Proposition~\ref{prop:projectiveBrauer}).  This illustrates the general obstruction analysis 
in Section~\ref{subsect:APHS}. 

This answers a question of Avilov \cite[Rem.~2]{avilov}, who asked where $X$ -- Case 2(ii) of his Theorem 1-- fits in the equivariant 
birational classification.

\subsection{Another special example}
We return to another example highlighted by Avilov -- Case 2(iv) of \cite[Th.~1]{avilov} -- where $G$ fits into an exact sequence
$$1 \ra C_2^5 \ra G \ra \fS_4 \ra 1.$$
The associated binary sextic form is
$$T_0T_1(T_0^4-T_1^4).$$
The hyperelliptic curve 
$$C = \{U^2 = T_0T_1(T_0^4-T_1^4) \}$$
has automorphism group
$$\Aut(C) = \left< \sigma, \tau \right>,
$$
where 
$$\sigma(T_0,T_1,U) = (e^{3\pi i /4}T_0, e^{\pi i /4}T_1,U) \quad
\tau(T_0,T_1,U) = (\frac{1}{\sqrt{2}}(T_1-T_0), \frac{1}{\sqrt{2}}(T_0+T_1),U).$$
The resulting group has relations
$$\sigma^4=\iota, \tau^2=\iota^2=1, \iota \tau = \tau \iota, (\sigma \tau)^3=\iota,$$
where $\iota$ is the hyperelliptic involution. It sits in a central extension
$$1 \rightarrow \mu_2=\left<\iota\right> \rightarrow \Aut(C) \rightarrow \fS_4 \rightarrow 1.$$
This curve appears as
\cite[\href{https://www.lmfdb.org/Genus2Curve/Q/4096/b/65536/1}{Genus two curve 4096.b.65536.1}]{lmfdb}.

We analyze the induced action on the branch points, using coordinate $T_0/T_1$:
$$b_1=\{ 0 \}, b_2= \{\infty\}, b_3 = \{1 \}, b_4=\{i\}, b_5 = \{-1\}, b_6=\{-i\}.$$
The generators act via permutations
$$\sigma \mapsto (3456), \quad \tau \mapsto (13)(25)(46)$$
and the hyperelliptic involution acts trivially.  
Consider the induced action on
$$\Pic^1(C) \supset C$$
as in (\ref{eq:iota}).  
Note that $\iota$ fixes only the solutions to $L^2=g^1_2$, the $16$ points (cf.~Section~\ref{sect:geometry})
$$b_1,\ldots,b_6,b_1+b_2+b_3-g^1_2=b_4+b_5+b_6-g^1_2,\ldots,b_1+b_5+b_6-g^1_2=b_2+b_3+b_4-g^1_2.$$
None of these is simultaneously fixed by $\sigma$ and $\tau$, so $\Pic^1(C)$ admits no fixed point for $\Aut(C)$.

As for the complete intersection, we may take equations 
$$
X=\{Q_1=Q_2=0\} \subset \bP^5,
$$ 
where
$$Q_1 = x_0^2 + x_1^2 + i x_2^2 - x_3^2 - i x_4^2, \quad
Q_2 = x_1^2 + x_2^2 + x_3^2 + x_4^2 + x_5^2.$$
\begin{prop}
Suppose that $G$ acts on $X$ so that the induced homomorphism
$$G \rightarrow \Aut(C)$$
is surjective. Then $X$ is not $G$-equivariantly birational to
$\bP^3$.  
\end{prop}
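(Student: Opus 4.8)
The plan is to combine Theorem~\ref{thm:main-qua} with the criterion of Proposition~\ref{prop:Burnside}, reducing everything to a statement about the residual action on a hyperplane section. By Theorem~\ref{thm:main-qua}, if $X$ were $G$-birational to $\bP^3$ with projectively linear action, then $G$ would act on $F_1(X)$ with a fixed point, hence on the double cover structure and ultimately on $C$ compatibly; in particular every element of $G$ would fix a line on $X$. So the first step is to pick a convenient element $g \in G$ — say one lifting the permutation $(3456)^2 = (35)(46)$, or more simply an element of the $C_2^5$ kernel acting by a sign change $\operatorname{diag}(1,1,1,1,-1,-1)$ or $\operatorname{diag}(1,1,1,1,1,-1)$ — and identify the fixed locus of $g$ on $X$. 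An element conjugate to $\pm\operatorname{diag}(1,1,1,1,-1,-1)$ acts freely (it is a translation by two-torsion on $F_1(X)$), so as noted after Proposition~\ref{prop:Burnside} its presence already contradicts $G$-rationality; the first thing to check is whether the given $G$ contains such an element. Given the binary sextic $T_0T_1(T_0^4-T_1^4)$ and the explicit permutation action, I would look for an element of the $C_2^5$ whose sign pattern has exactly two minus signs after diagonalizing $Q_1,Q_2$ into the form $\sum x_i^2 = \sum \lambda_i x_i^2$.

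If instead one wants to use the Burnside-type obstruction directly: take $g$ fixing a hyperplane section $S \subset X$, which is a quartic del Pezzo surface, and take $H = \langle g \rangle$ or a larger subgroup of $G$ stabilizing $S$. The residual $H$-action on $S$ is a sign-change action on a diagonal quartic del Pezzo, exactly as reviewed in Section~\ref{subsect:RSC}. The key step is then to show $S$ is not $H$-birational to $\bP(V)$ nor to $\bP(E)$ for $E$ a rank-two bundle over a curve — and the mechanism is the same as in case (1,1,1,1,-1) of Section~\ref{subsect:RSC}: the fixed locus of $g$ on $S$ (or of some involution in $H$) contains a curve of positive genus, giving nontrivial $\rH^1(H,\Pic(S))$ by the argument of \cite{BoPro}, which rules out both alternatives since those have trivial such cohomology. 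One then invokes Proposition~\ref{prop:Burnside} to conclude $X$ is not $G$-birational to $\bP^3$ with projectively linear action.

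Concretely, I would trace through the six branch points $b_1 = \{0\}$, $b_2 = \{\infty\}$, $b_3=\{1\}$, $b_4=\{i\}$, $b_5=\{-1\}$, $b_6=\{-i\}$ and the action $\sigma \mapsto (3456)$, $\tau \mapsto (13)(25)(46)$ to find an involution in $\fS_4$ — say the image of $(35)(46)$ or of a transposition — whose lift to $G$ fixes a hyperplane section with the right cohomological property, or to exhibit directly an element of the kernel $C_2^5$ whose eigenvalue pattern on the diagonalized quadrics is $(1,1,1,1,-1,-1)$ up to conjugacy. The explicit forms $Q_1 = x_0^2+x_1^2+ix_2^2-x_3^2-ix_4^2$, $Q_2 = x_1^2+x_2^2+x_3^2+x_4^2+x_5^2$ can be simultaneously diagonalized, and one reads off the $\lambda_i$; the Weierstrass/branch data matches the sextic above.

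The main obstacle I anticipate is the bookkeeping of which lifts to the central extension $1 \to \mu_2 \to G \to \fS_4 \to 1$ actually occur in the given $G$, and verifying that the relevant element is genuinely of order two (not four) with the stated fixed locus — the central $\iota = \sigma^4$ means naive sign-change elements may square to the hyperelliptic involution rather than the identity, so one must be careful about whether the obstruction comes from a freely-acting two-torsion translation (immediate contradiction via Theorem~\ref{thm:main-qua}) or from an involution fixing a positive-genus curve (contradiction via Proposition~\ref{prop:Burnside}). Resolving this case distinction cleanly — ideally showing that in this $G$ there is always an element forcing one or the other conclusion — is the crux, and I expect the cleanest route is the Theorem~\ref{thm:main-qua} one: show $G$ contains an element conjugate to $\pm\operatorname{diag}(1,1,1,1,-1,-1)$ (acting freely on $F_1(X)$), so $G$ cannot act on $F_1(X)$ with a fixed point, hence $X$ is not $G$-birational to $\bP^3$.
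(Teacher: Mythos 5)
Your opening move is the paper's: by Theorem~\ref{thm:main-qua}, it suffices to show that $G$ has no fixed point on $F_1(X)$. But both of your proposed ways of finishing have a genuine gap: they each require a specific kind of element to lie in $G$, and the hypothesis --- surjectivity of $G \to \Aut(C)$ --- does not provide one. A freely acting element of $\Aut(X)$ is precisely a nontrivial translation in $J(C)[2]$, i.e.\ an element of the kernel of $\Aut(X)\to\Aut(C)$; surjectivity onto $\Aut(C)$ says nothing about $G\cap J(C)[2]$, which could a priori be trivial (whether the extension $1\to J(C)[2]\to\Aut(X)\to\Aut(C)\to 1$ admits a complement is exactly the kind of unresolved bookkeeping you flag at the end, and the proof should not hinge on it). Likewise, the Burnside route needs an involution in $G$ fixing a hyperplane section with nontrivial $\rH^1(H,\Pic(S))$; the lifts of $\iota$ that actually lie in $G$ need not be of this diagonal form, and elements mapping to $\iota$ act on $F_1(X)$ by $p\mapsto -p+\beta$, which \emph{does} have fixed points, so no single element need obstruct anything. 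The group can fail to have a common fixed point even though every element has one.

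The paper's finish avoids all of this: a $G$-fixed point on $F_1(X)$ would push forward along the equivariant squaring map $F_1(X)\to\Pic^1(C)$ of (\ref{morphism:Wang}) to a point of $\Pic^1(C)$ fixed by the image of $G$ in $\Aut(C)$ --- which by hypothesis is all of $\Aut(C)$. Any such point is fixed by $\iota$, hence is one of the $16$ solutions of $2L=g^1_2$ listed just before the proposition, and the explicit permutation action $\sigma\mapsto(3456)$, $\tau\mapsto(13)(25)(46)$ on the branch points shows none of these $16$ points is fixed by both $\sigma$ and $\tau$. This argument depends only on the image of $G$ in $\Aut(C)$, so it works for every $G$ satisfying the hypothesis. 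You gesture at this ("hence on the double cover structure and ultimately on $C$ compatibly") but never land on the squaring map or on the computation with the $16$ points, which is the actual content of the proof.
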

This partly answers a question in \cite[Rem.~2]{avilov}; Avilov asked whether these are equivariantly birational to $\bP^3$.
\begin{proof}
By Theorem~\ref{thm:main-qua}, if $X$ is birational to $\bP^3$ then $F_1(X)$ admits a fixed point. But then 
$\Pic^1(C)$ would admit one as well via the squaring map $F_1(X) \ra \Pic^1(C)$. This would contradict the computation above.

The argument works under the weaker hypothesis that the image contains the $2$-Sylow subgroup
$$\left<\sigma, \tau\sigma^2\tau \right> \subset \Aut(C)$$
as
$\tau \sigma^2 \tau \mapsto (12)(46).$
\end{proof}

We may have rationality when smaller groups act. We restrict attention to the automorphism of order eight acting on coordinates by
$$\gamma=\left( \begin{matrix}
\alpha & 0 & 0 & 0 &  0 & 0 \\
0      & 0 & 0 & 0 & -1 & 0 \\
0      & 1 & 0 & 0 &  0 & 0 \\
0      & 0 & 1 & 0 &  0 & 0 \\
0      & 0 & 0 & 1 &  0 & 0 \\
0      & 0 & 0 & 0 &  0 & 1 
\end{matrix} \right), \quad \alpha=e^{3\pi i/4}.$$
This is chosen so that $\gamma\cdot Q_2=Q_2$ and 
$$\gamma\cdot Q_1 = \alpha^2 x_0^2 + x_2^2 + i x_3^2 - x_4^2 - i x_1^2 = -i Q_1.$$
Thus we may interpret $\gamma$ as a lift of $\sigma^{-1}$.  

What are the fixed points? 
The action on the underlying space via the contragredient representation is:
$$\left( \begin{matrix}
\alpha^7 & 0 & 0 & 0 &  0 & 0 \\
0      & 0 & 0 & 0 & -1 & 0 \\
0      & 1 & 0 & 0 &  0 & 0 \\
0      & 0 & 1 & 0 &  0 & 0 \\
0      & 0 & 0 & 1 &  0 & 0 \\
0      & 0 & 0 & 0 &  0 & 1 
\end{matrix} \right),$$
with eigenvectors
\begin{align*}
1: \ &  [0,0,0,0,0,1] \not \in X \\
\alpha: \ & p_1:=[0,1,\alpha^7,\alpha^6,\alpha^5,0]\in X \\
\alpha^3: \ & [0,1,\alpha^5,\alpha^2, \alpha^7,0] \not \in X \\
\alpha^5: \ & p_2:=[0,1,\alpha^3,\alpha^6,\alpha,0] \in X \\
\alpha^7: \ & [1,0,0,0,0,0], [0,1,\alpha,\alpha^2,\alpha^3,0]\not \in X.
\end{align*}
The last eigenspace is isotropic for $\{Q_2=0\}$ and thus meets $X$ in
two points $p_3,p_4:=[\pm 2i, 1,\alpha,\alpha^2,\alpha^3,0].$
The span of $p_1$ and $p_2$ is also isotropic for $\{Q_2=0\}$. 

\begin{prop}
The lines $\ell(p_2,p_3)$ and $\ell(p_2,p_4)$ are invariant under the action.
\end{prop}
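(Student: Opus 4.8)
The plan is to verify directly that the line $\ell(p_2,p_3)$ (and by the identical argument $\ell(p_2,p_4)$) is preserved by $\gamma$, using the eigenvector data already computed. Recall that $p_2$ lies in the $\alpha^5$-eigenspace and that $p_3 = [2i,1,\alpha,\alpha^2,\alpha^3,0]$ lies in the span of the two $\alpha^7$-eigenvectors $[1,0,0,0,0,0]$ and $[0,1,\alpha,\alpha^2,\alpha^3,0]$. The key point is that the two-dimensional subspace $\langle p_2, p_3\rangle \subset k^6$ is spanned by an eigenvector in the $\alpha^5$-eigenspace together with a vector in the $\alpha^7$-eigenspace, hence is $\gamma$-invariant as a linear subspace (it need not be pointwise fixed). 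Consequently its projectivization, the line $\ell(p_2,p_3) \subset \bP^5$, is sent to itself by the projective transformation $\gamma$.

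First I would record that the $\alpha^7$-eigenspace is two-dimensional, spanned by $[1,0,0,0,0,0]$ and $e := [0,1,\alpha,\alpha^2,\alpha^3,0]$, and that $p_3,p_4 = [\pm 2i,1,\alpha,\alpha^2,\alpha^3,0]$ both lie in this eigenspace (as already noted, the eigenspace is isotropic for $Q_2$ and meets $X$ in exactly these two points). Then I would observe that $\langle p_2\rangle$ and $\langle p_3\rangle$ are each $\gamma$-stable one-dimensional subspaces — the first because $p_2$ is an $\alpha^5$-eigenvector, the second because $p_3$ is an $\alpha^7$-eigenvector — so their linear span is a $\gamma$-stable plane in $k^6$, and likewise for $\langle p_2, p_4\rangle$. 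Passing to $\bP^5$, this shows $\gamma \cdot \ell(p_2,p_3) = \ell(p_2,p_3)$ and $\gamma \cdot \ell(p_2,p_4) = \ell(p_2,p_4)$, which is the assertion. I should also note in passing that these lines genuinely lie on $X$: $p_1,p_2$ span an isotropic line for $Q_2$ and similarly $p_3,p_4$, and one checks the relevant two-dimensional spans are isotropic for both $Q_1$ and $Q_2$ so that the lines are contained in $X$ — but the invariance statement itself only needs the eigenvector bookkeeping.

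I do not expect a serious obstacle here: the whole content is that a line joining two eigenvectors of a linear map (for possibly different eigenvalues) projects to an invariant line. The one thing to be slightly careful about is that "invariant" means invariant as a set (the line is preserved) rather than pointwise fixed — since $p_2$ and $p_3$ have different $\gamma$-eigenvalues, $\gamma$ acts on $\ell(p_2,p_3)$ as a nontrivial projective automorphism fixing the two points $p_2$ and $p_3$. A second minor point worth a sentence is that $\gamma$ acts on $X$ in the first place, which is exactly the computation $\gamma\cdot Q_2 = Q_2$ and $\gamma\cdot Q_1 = -iQ_1$ recorded above, so that $\gamma$ preserves $\{Q_1=Q_2=0\}$ and hence permutes its lines; combined with the eigenvector argument this gives that $\ell(p_2,p_3)$ and $\ell(p_2,p_4)$ are among the lines fixed setwise.
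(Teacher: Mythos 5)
Your proof is correct and is essentially the argument the paper intends: the proposition is stated without explicit proof because it follows immediately from the preceding eigenvector computation, exactly as you observe — $p_2$ spans the $\alpha^5$-eigenline while $p_3,p_4$ lie in the two-dimensional $\alpha^7$-eigenspace (hence are themselves fixed points of $\gamma$ in $\bP^5$), so each plane $\langle p_2,p_3\rangle$, $\langle p_2,p_4\rangle$ is $\gamma$-stable and the corresponding lines are invariant setwise. Your parenthetical caveats (invariance is setwise rather than pointwise, and one should check the spans are isotropic for both $Q_1$ and $Q_2$ so the lines actually lie on $X$) are both apt; the isotropy check does go through with the given coordinates.
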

 
Note however that 
$$\gamma^4=\left(
\begin{matrix} 
-1 & 0 & 0 & 0 & 0 & 0 \\
0 & -1 & 0 & 0 &  0 & 0 \\
0 & 0 & -1 & 0 &  0 & 0 \\
0 & 0 & 0 & -1 &  0 & 0 \\
0 & 0 & 0 & 0 &  -1 & 0 \\
0 & 0 & 0 & 0 &  0 & 1 
\end{matrix}
\right)
$$
acts on the variety of lines on $X$ with $16$ fixed points, i.e., the lines on the
quartic del Pezzo surface $X \cap \{x_5=0\}$.

This example is instructive in that the group
actions are not associated with Weyl groups:
\begin{itemize}
\item{$W(D_6)$ is  realized as the signed permutation
matrices with an even number of $(-1)$ signs, 
a semidirect product
$$
W(D_6) \simeq \fS_6 \ltimes C_2^5,
$$
where $C_2^5$ is the diagonal matrices;}
\item{the semidirect product 
$$\fS_6 \ltimes C_2^6/\text{Diagonal},$$
interpreted as a Weyl group for the projective orthogonal group.}
\end{itemize}
These are not the same; the Weyl group has a nontrivial central element
$$(-1,\ldots,-1)$$
whereas the latter group has {\em no} nontrivial central elements.  

However, {\em neither} of these actions coincide with our
situation! Both lack an element $\gamma$ of order eight,
sitting over a four-cycle of $\fS_6$, with $\gamma^4 \in H'$
(the $2$-elementary group of diagonal automorphisms) having nontrivial determinant.  
The automorphism group $\Aut(X)$ is NOT a semidirect
product of $\Aut(D\subset P^1)$ and $H'$, which
admits NO element $\gamma$ mapping to
$$ 
\left(
\begin{matrix} 
1 & 0 \\
0 & -i
\end{matrix}\right)
$$
and whose fourth power is diagonal with entries $(-1,-1,-1,-1,-1,1)$.
Indeed, the candidate in the Weyl group
$$\tilde{\gamma} = \left(
\begin{matrix} 
-1 & 0 & 0 & 0 & 0 & 0 \\
0 & 0 & 0 & 0 & -1 & 0 \\
0 & 1 & 0 & 0 &  0 & 0 \\
0 & 0 & 1 & 0 &  0 & 0 \\
0 & 0 & 0 & 1 &  0 & 0 \\
0 & 0 & 0 & 0 &  0 & 1 
\end{matrix}
\right)$$
induces 
$$\tilde{\gamma}^4 = \left(
\begin{matrix} 
1& 0 & 0 & 0 & 0 & 0 \\
0 & -1& 0 & 0 & 0 & 0 \\
0 & 0 & -1& 0 & 0 & 0 \\
0 & 0 & 0 & -1& 0 & 0 \\
0 & 0 & 0 & 0 &-1 & 0 \\
0 & 0 & 0 & 0 & 0 & 1 
\end{matrix}
\right)
$$
which has the wrong diagonal entries.

The rationality of these specific examples may be deduced in terms of the group actions:
\begin{prop}
Let $\gamma$ be an automorphism of order eight on $X$ such that
$\gamma^4$ fixes a smooth hyperplane section $S\subset X$ and 
the induced action on $S$ is by an element acting on the degeneracy locus (of both $X$ and $S$) as a four-cycle. Then $S$ admits a line invariant under $\gamma$.
\end{prop}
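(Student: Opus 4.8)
The plan is to exploit the explicit structure established in the body of the paper: for a smooth complete intersection of two quadrics $S \subset \bP^4$ (a quartic del Pezzo surface), the sixteen lines on $S$ are organized by the geometry of the associated pencil of quadrics in $\bP^3$, and the action of $\Aut(S,C)$ on these lines is governed by the induced permutation of the $2g+2 = 6$ branch points $b_1,\dots,b_6$ of the double cover $C \to \bP^1$, where $C$ has genus two. So first I would recall, as in Section~\ref{subsect:RSC}, that the $16$ lines on $S$ form a torsor under $J(C)[2]$ over the discriminant data, compatibly with the extension
$$
1 \rightarrow J(C)[2] \rightarrow \Aut(S) \rightarrow \Aut(S,C) \rightarrow 1
$$
from the Proposition in Section~\ref{sect:geometry}. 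The hypothesis is that $\gamma^4$ acts on $S$ via an automorphism whose image in $\Aut(S,C)/\langle \iota \rangle \subset \Aut(\bP^1)$ is a four-cycle on the branch points.

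Second, I would analyze the fixed points of this four-cycle on the set of $16$ lines. A four-cycle $\sigma$ on $\{b_1,\dots,b_6\}$ — say $\sigma = (b_3 b_4 b_5 b_6)$, fixing $b_1, b_2$ — acts on $J(C)[2]$, identified with even partitions $S \sqcup S^c$ of the branch set, through this permutation. The subgroup of $J(C)[2]$ fixed by $\sigma$ is spanned by $b_1 - b_2$ and by $b_3 + b_4 + b_5 + b_6 - 2g^1_2$ (which is trivial in $J(C)[2]$), so the fixed subgroup has order $2$; combined with the fixed branch points $b_1,b_2$ one gets a small but nonempty set of $\gamma^4$-fixed lines on $S$. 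In fact the cleaner route is to use the $16$ points of order-two in $\Pic^1(C)$ over $C$ (the solutions to $L^{\otimes 2} = g^1_2$), exactly as listed in the preceding example: these index the sixteen lines, and a direct check shows a four-cycle fixes precisely those corresponding to the partitions stable under it — at least the two Weierstrass points $b_1, b_2$ survive. Thus $\gamma^4$ has a nonempty fixed set $\cL$ on the variety of lines $F_1(S)$, and $\gamma$ permutes $\cL$ since $\gamma$ normalizes $\langle \gamma^4 \rangle$.

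Third — and this is where the order-eight hypothesis is used — I would argue that $\gamma$ acting on the finite set $\cL$ must itself have a fixed point. Since $\gamma^4$ fixes $\cL$ pointwise, $\gamma$ acts on $\cL$ through the cyclic quotient $\langle \gamma \rangle / \langle \gamma^4 \rangle \simeq C_4$; so it suffices to show $|\cL|$ is odd, or more precisely that the $C_4$-action on $\cL$ has a fixed point. Here I would pin down $|\cL|$ exactly: the combinatorial/geometric count above should give that a four-cycle on six branch points fixes an odd number of the sixteen lines (the expected count is the two lines over $b_1, b_2$ together with lines built symmetrically from the four-cycle orbit, and a parity count via the $J(C)[2]$-torsor structure forces oddness). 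An odd orbit set for a group of $2$-power order necessarily contains a global fixed point, producing a line $\ell \subset S \subset X$ fixed by $\gamma$.

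The main obstacle I anticipate is the third step: verifying that the fixed set of the four-cycle on the sixteen lines has odd cardinality (or otherwise contains a $\gamma$-orbit of size one). This requires carefully tracking how the lift $\gamma$ — not just $\gamma^4$ — acts on the torsor of lines, including the ambiguity coming from the central $\mu_2$ and from the choice of square root $F$ of $\Pic^1(C)$; the permutation data alone determines the $\Aut(S,C)$-action on $J(C)[2]$, but one must rule out that $\gamma$ swaps the fixed lines of $\gamma^4$ in pairs with no fixed point. I would resolve this either by the explicit eigenvector computation in the style of the worked example (exhibiting the invariant line directly, as was done for $\ell(p_2,p_3)$ and $\ell(p_2,p_4)$), or by a clean parity argument using that $\gamma$ has even order and the fixed-point count of $\gamma^4$ is odd.
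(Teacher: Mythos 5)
Your overall framing is right --- $\gamma^4$ fixes $S$ pointwise, hence fixes all sixteen lines of $S$ (these are exactly the $\gamma^4$-fixed points on $F_1(X)$), so everything reduces to showing that the induced order-four action of $\gamma$ on this $16$-element set has a fixed point --- but the mechanism you propose for that last step cannot work. A $2$-group acting on a set of even cardinality has an even number of fixed points (all orbits have $2$-power size, so the fixed-point count is congruent to $16$ modulo $2$); the number of $\gamma$-invariant lines is therefore never odd, and worse, it can be zero. The paper's own element $\gamma_1$, of order four but inducing a product of two transpositions on the degeneracy locus, acts on the sixteen lines with four free orbits of size four and no invariant line. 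So no parity argument can succeed, and the four-cycle hypothesis must enter in an essential, non-numerical way. The paper's proof supplies exactly this: it identifies the sixteen lines with weights of the $D_5$ lattice attached to $S$ (whose degeneracy locus has five points --- not the six branch points of the genus-two curve of $X$ that your step 2 works with), observes that the induced automorphism is an order-four element of $W(D_5)$ lying over a four-cycle, notes that all such elements are conjugate, and computes the action of a representative on $\Pic(S)$ to exhibit the invariant lines $E_5$ and $2L-E_1-E_2-E_3-E_4-E_5$ directly.

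Your step 2 also computes the wrong object: the fixed subgroup of the four-cycle on $J(C)[2]$ and the two fixed Weierstrass points concern fixed points on $\Pic^1(C)$, but the sixteen lines form a torsor twisted away from that group, and --- as you yourself note at the end --- the permutation data on the branch points does not by itself decide whether $\gamma$ fixes a line or swaps the candidates in pairs without fixed points. Resolving that ambiguity is the entire content of the proposition, and your proposal defers it to ``an explicit eigenvector computation in the style of the worked example,'' which is in effect the paper's proof rather than a route around it. To repair the argument you would need to replace the parity step with the conjugacy-class analysis in $W(D_5)$, or an equivalent explicit orbit computation on $\Pic(S)$.
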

\begin{proof}
The fixed locus of $H'$ corresponds to a del Pezzo $S$ of degree 4
with action associated with the upper $5\times 5$ matrix.
Its lines correspond to weights in the $D_5$ root system:
$$
-e_i - e_j - e_k - e_l - e_m, 
 e_i + e_j - e_k - e_l - e_m,
 e_i + e_j + e_k + e_l - e_m.
 $$
 With respect to the standard basis of the Picard group $\{L,E_1,E_2,E_3,E_4,E_5\}$
 projected into $\Pic(S)/\bZ K_S$ we have
 $$L-E_i \mapsto e_i,  \quad 2L-E_j-E_k-E_l-E_m \mapsto -e_i.$$
 
 The induced automorphism has order four. The only possible elements of $W(D_5)$ of order four fix a line.  
 For example, the signed permutations
 $$\left( \begin{matrix}
 1 & 0 & 0 & 0 & 0 \\
 0  & 0 & 0 & 0 & -1 \\
 0   & 1 & 0 & 0 & 0 \\
 0  & 0  & 1 & 0 & 0 \\
 0 & 0 & 0 & -1 & 0 
 \end{matrix}
 \right),
 \left( \begin{matrix}
 1 & 0 & 0 & 0 & 0 \\
 0  & 0 & 0 & 0 & 1 \\
 0   & 1 & 0 & 0 & 0 \\
 0  & 0  & 1 & 0 & 0 \\
 0. & 0 & 0 & 1 & 0 
 \end{matrix}\right),
 \left( \begin{matrix}
 1 & 0 & 0 & 0 & 0 \\
 0  & 0 & 0 & 0 & -1 \\
 0   & -1 & 0 & 0 & 0 \\
 0  & 0  & -1 & 0 & 0 \\
 0 & 0 & 0 & -1 & 0 
 \end{matrix}\right)
 $$
 are all conjugate in $W(D_5)$ so it suffices to consider the
 first. This acts on $\Pic(S)$ by
 \begin{align*} 
 L &\mapsto 2L-E_1-E_3 -E_4 \\
 E_1 & \mapsto L-E_3-E_4 \\
 E_2 & \mapsto L - E_1 - E_4 \\
 E_3 & \mapsto L - E_1 - E_3 \\
 E_4 & \mapsto E_2 \\
 E_5 & \mapsto  E_5
 \end{align*}
 which leaves the lines $E_5$ and $2L-E_1-E_2-E_3-E_4-E_5$
 invariant.
 \end{proof}

 The assumption that the induced permutation of the degeneracy
 locus is a four-cycle is essential. The element
 $$\gamma_1=\left( \begin{matrix}
 1 & 0 & 0 & 0 & 0 \\
 0  & 0 & 0 & -1 & 0 \\
 0   & 0 & 0 & 0 & -1 \\
 0  & 1 & 0 & 0 & 0 \\
 0 & 0 & 1 & 0 & 0 
 \end{matrix}\right)
 $$
 acts via
  \begin{align*} 
 L &\mapsto 2L-E_1-E_4 -E_5 \\
 E_1 & \mapsto L-E_4-E_5 \\
 E_2 & \mapsto L - E_1 - E_5 \\
 E_3 & \mapsto L - E_1 - E_4 \\
 E_4 & \mapsto E_3 \\
 E_5 & \mapsto  E_2
 \end{align*}
 with orbits
 \begin{align*}
 E_1 \mapsto L-E_4-E_5 \mapsto& 2L-E_1-E_2-E_3-E_4-E_5 \mapsto L-E_2-E_3 \\
 E_2 \mapsto L-E_1-E_5 \mapsto& L-E_1-E_2 \mapsto E_5 \\
 E_3 \mapsto L-E_1-E_4 \mapsto& L-E_1-E_3 \mapsto E_4 \\
 L-E_2-E_4  \mapsto L-E_3-E_4 \mapsto& L-E_3-E_5 \mapsto L-E_2-E_5
  \end{align*}
 However, $\gamma_1$ maps to a product of two transposition in $\fS_6$.


\bibliographystyle{alpha}
\bibliography{EGodd2Q}

\end{document}